\title[Local isometric embeddings of Lie groups]
{On local isometric embeddings of three-dimensional Lie groups
}
\author[Y.\ Agaoka]{Yoshio Agaoka}
\author[T.\ Hashinaga]{Takahiro Hashinaga}
\address[Y.\ Agaoka]{Department of Mathematics, Hiroshima University, 
Higashi-Hiroshima 739-8521, Japan}
\address[T.\ Hashinaga]{National Institute of Technology, Kitakyushu College, 
Kitakyushu, Fukuoka 802-0985, Japan}
\email[Y.\ Agaoka]{agaoka@hiroshima-u.ac.jp}
\email[T.\ Hashinaga]{hashinaga@kct.ac.jp}
\keywords{Isometric embedding, Gauss equation, Lie group, Left-invariant Riemannian metric}
\thanks{2010 \textit{Mathematics Subject Classification}. 
Primary~53B20, Secondary~53C30}
\newtheorem{thm}{Theorem}[section]
\newtheorem{prop}[thm]{Proposition}
\newtheorem{lem}[thm]{Lemma}
\theoremstyle{definition}
\newtheorem{defi}[thm]{Definition}
\newtheorem{rem}[thm]{Remark}
\newcommand{\g}{\mathfrak{g}}
\newcommand{\M}{\widetilde{\mathfrak{M}}}
\newcommand{\PM}{\mathfrak{PM}}
\newcommand{\A}{\mathbb{R}^{\times}\mathrm{Aut}(\g)}
\newcommand{\Aut}{\mathrm{Aut}(\g)}
\newcommand{\naiseki}{\langle~ ,~ \rangle}
\newcommand{\R}{\mathbb{R}}
\newcommand{\GL}{\mathrm{GL}_n(\R)} 
\newcommand{\OO}{\mathrm{O}(n)} 
\numberwithin{equation}{section}
\begin{document}

\begin{abstract}
Due to Janet-Cartan's theorem, any analytic Riemannian manifolds can be locally 
isometrically embedded into a sufficiently high dimensional Euclidean space.
However, for an individual Riemannian manifold $(M,g)$, it is in general hard to determine 
the least dimensional Euclidean space into which $(M,g)$ can be locally isometrically embedded, 
even in the case where $(M,g)$ is homogeneous. 
In this paper, when the space $(M,g)$ is locally isometric to a three-dimensional Lie group 
equipped with a left-invariant Riemannian metric, we classify all such spaces that can be locally 
isometrically embedded into the four-dimensional Euclidean space. 
Two types of algebraic equations, the Gauss equation and the derived Gauss 
equation, play an essential role in this classification.
\end{abstract}

\maketitle

\section{Introduction}

In this paper we study the local isometric embedding problem of three-dimensional Lie groups $G$ 
equipped with left invariant Riemannian metrics.
We classify all left invariant Riemannian metrics on $G$ that can be locally isometrically 
embedded into the four-dimensional Euclidean space $\mathbb{R}^4$.
The main results are summarized in Theorem~\ref{Main} below.

We first review some known results on local isometric embedding problems.
Let $(M,g)$ be an $n$-dimensional Riemannian manifold. 
An embedding $F$ of $(M,g)$ into the Euclidean space $\mathbb{R}^m$ is called an \textit{isometric embedding} if 
the Riemannian metric $g$ is equal to the pullback of the standard Euclidean metric by $F$.  
Namely the differential equation
\begin{align*}
\langle dF, dF \rangle_{\mathbb{R}^m} =g
\end{align*} 
holds. 
By virtue of the famous Jannet-Cartan's theorems (\cite{Car, J2}), it is known that any $n$-dimensional analytic 
Riemannian manifold can be locally isometrically embedded into the Euclidean space of 
dimension $\frac{n(n+1)}{2}$.
(Later, as is also well known, Nash established the global isometric embedding theorem (\cite{N}) in 
the $C^{\infty}$-category.)

On the other hand, in contrast to the above general theory, not so many results are known for 
individual Riemannian manifolds.
For example, to determine the least dimensional Euclidean space into which a given 
Riemannian manifold can be locally isometrically embedded is a quite natural problem in 
Riemannian submanifold theory.
However, this problem is still unsolved except for some special manifolds. 
Concerning this problem, the 
first author and Kaneda have studied deeply for the case of Riemannian 
symmetric spaces. 
They verified that some of Kobayashi's standard embeddings (\cite{Kobayashi}) of symmetric $R$-spaces 
give the least dimensional isometric embedding even in the local standpoint.
Moreover they proved that some of them are locally rigid (for details, see \cite{AK10} and 
references therein). 

In the present paper, we study local isometric embeddings of three-dimensional Lie groups $G$ 
equipped with a left-invariant Riemannian metric $g$ into the four-dimensional Euclidean space $\mathbb{R}^4$.

We denote by $\g$ the Lie algebra of $G$ and $\naiseki$ the inner product of $\g$ determined by 
the left invariant Riemannian metric $g$ at the unit element $e \in G$.
Since the existence or non-existence of local isometric embeddings of $(G,g)$ is 
completely determined by its infinitesimal data $(\g,\naiseki)$, we may state the results in 
terms of Lie algebras.
We often call the pair $(\g,\naiseki)$ the metric Lie algebra.
Before stating the main results, we first list the classification table of three-dimensional real Lie algebras:

\begin{table}[ht]
\begin{tabular}{|c|l|c|} \hline
Name \rule[-0.3cm]{0cm}{0.8cm} & \hspace{1cm} Non-zero bracket relation & \\ \hline
$\mathbb{R}^3$ \rule{0cm}{0.5cm} & & Abelian\\
$\mathfrak{h}_3$ \rule{0cm}{0.5cm} & $[e_1,e_2]=e_3$ & Nilpotent\\
$\mathfrak{r}_3$ \rule{0cm}{0.5cm} & $[e_1,e_2]=e_2+e_3,~[e_1,e_3]=e_3$ & Solvable\\
$\mathfrak{r}_{3, \alpha}$ \rule{0cm}{0.5cm} & $[e_1,e_2]=e_2,~[e_1,e_3]=\alpha e_3$ \hspace{3mm} $(-1 \leq \alpha \leq 1)$ & Solvable \\
$\mathfrak{r}^{\prime}_{3, \alpha}$ \rule{0cm}{0.5cm} & $[e_1,e_2]=\alpha e_2 - e_3,~[e_1,e_3]=e_2 + \alpha e_3$ \hspace{3mm} 
$(\alpha \geq 0)$ & Solvable \\ 
$\mathfrak{sl}_2(\mathbb{R})$ \rule{0cm}{0.5cm} & $[e_1,e_2]=-e_3,~[e_2,e_3]=e_1,~[e_3,e_1]=e_2$ & Simple  \\
$\mathfrak{so}(3)$ \rule[-0.3cm]{0cm}{0.8cm} & $[e_1,e_2]=e_3,~[e_2,e_3]=e_1,~[e_3,e_1]=e_2$ & Simple \\
\hline
\end{tabular}  
\vspace{0.3cm}
\label{table:three-dimensional}
\caption{Three-dimensional real Lie algebras} 
\end{table} 

\noindent
Then our main results are summarized in the following:

\begin{thm}\label{Main}
Let $G$ be a three-dimensional Lie group endowed with a left-invariant Riemannian metric $g$. 
If $(G, g)$ can be locally isometrically embedded into $\mathbb{R}^4$, 
then the corresponding metric Lie algebra $(\g, \naiseki)$ is isometric to one of the following 
spaces up to scaling:
\begin{itemize}
\item Abelian Lie algebra $\mathbb{R}^3$ with any inner product.
\item $\mathfrak{r}_{3, 0}$ with the inner product $\naiseki$.
\item $\mathfrak{r}^{\prime}_{3, 0}$ with the inner product $\naiseki$.
\item $\mathfrak{so}(3)$ with the inner product $\naiseki$.
\end{itemize}
Here $\naiseki$ for each Lie algebra is the inner product such that the basis $\{ e_1, e_2, e_3 \}$ in 
Table~$1$ is orthonormal.
\end{thm}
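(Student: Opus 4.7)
The plan is to carry out a case-by-case analysis across the seven isomorphism classes of three-dimensional real Lie algebras listed in Table~1. For each class $\g$, the first step is to reduce the set of inner products to a finite-dimensional moduli space by quotienting by the natural action of $\A$ (scaling plus Lie algebra automorphisms), obtaining explicit normal forms for $\naiseki$. From each normal form one computes, at the identity, the Levi-Civita connection via Koszul's formula, and then the Riemann tensor $R$ and its covariant derivative $\nabla R$; by left-invariance these are constant tensors on $\g$ depending polynomially on the moduli parameters.

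The key necessary condition for a local isometric embedding into $\R^4$ is the Gauss equation in codimension one: there must exist a symmetric bilinear form $h$ on $\g$ (the second fundamental form at the identity) such that
\[
\langle R(X,Y)Z, W\rangle \;=\; h(Y,Z)\,h(X,W) - h(X,Z)\,h(Y,W).
\]
This is a polynomial system in the entries of $h$ and the moduli parameters; it eliminates most candidates outright and, where solvable, determines $h$ up to finite ambiguity. The derived Gauss equation, obtained by covariantly differentiating the identity above and using the Codazzi-Mainardi relation $(\nabla_X h)(Y,Z) = (\nabla_Y h)(X,Z)$, yields further polynomial constraints coupling $R$, $\nabla R$, and $h$; under left-invariance these also reduce to algebraic identities at a single point. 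Applying both obstructions in tandem is expected to winnow the candidates down to the four listed in the theorem.

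For the positive cases the embeddings are explicit: the abelian case realizes as a hyperplane $\R^3 \subset \R^4$; $\mathfrak{so}(3)$ with its bi-invariant metric is the round sphere $S^3 \subset \R^4$; a direct connection computation shows that $(\mathfrak{r}^{\prime}_{3,0}, \naiseki)$ is in fact flat, hence embeds in a hyperplane; and $(\mathfrak{r}_{3,0}, \naiseki)$ is isometric to the Riemannian product $\mathbb{H}^2 \times \R$, which embeds in $\R^4$ via any local isometric embedding of $\mathbb{H}^2$ into $\R^3$ (the pseudosphere, for example). The main obstacle is the polynomial elimination in the multi-parameter families $\mathfrak{sl}_2(\R)$, $\mathfrak{r}_{3,\alpha}$, and $\mathfrak{r}^{\prime}_{3,\alpha}$: the Gauss equation alone leaves open subcases distinguished by the signature of the curvature operator and by the rank structure of $h$, and it is the role of the derived Gauss equation to decisively rule out each such subcase. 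Organizing these subcases so that the two obstructions combine to a clean contradiction constitutes the technical heart of the argument.
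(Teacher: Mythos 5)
Your proposal follows essentially the same route as the paper: reduce the inner products on each of the seven Lie algebras to explicit normal forms via automorphisms and scaling (the Milnor-type theorems of Hashinaga--Tamaru), compute $R$ and $\nabla R$ at the identity, use the codimension-one Gauss equation to eliminate most cases and pin down $h$ up to sign, and then use the differentiated Gauss equation together with the symmetry of the third-order coefficients (your Codazzi relation is exactly the symmetry $h_{ijk}=h_{jik}$ the paper extracts from the Ricci formula) to rule out the surviving parameter ranges, with the four positive cases handled by the standard explicit embeddings. This matches the paper's argument in structure and in all essential ideas; what remains is only the explicit polynomial bookkeeping that the paper carries out in Sections 4 and 5.
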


Note that among the above four metric Lie algebras, the first and the third are flat.
($\mathfrak{r}^{\prime}_{3, 0}$ is isomorphic to the Lie algebra of infinitesimal 
motions of three-dimensional Euclidean space.)
The second metric Lie algebra $(\mathfrak{r}_{3, 0}, \naiseki)$ corresponds to the direct 
product of the hyperbolic plane $\mathbb{R}\mathrm{H}^2$ and the one-dimensional flat Euclidean 
space $\mathbb{R}^1$.
The fourth metric Lie algebra $(\mathfrak{so}(3), \naiseki)$ corresponds to the space of constant positive 
sectional curvature.
It is well known that these four spaces can be locally isometrically embedded into $\mathbb{R}^4$, 
and our task is to show the non-existence of local isometric embeddings into $\mathbb{R}^4$ for 
the remaining three-dimensional metric Lie algebras $(\g,\naiseki)$.

However, this is not an easy task.
The Gauss equation is a well known obstruction to the existence of local isometric embeddings of 
Riemannian manifolds.
For some cases, it gives a powerful tool to show the non-existence of local isometric embeddings 
(see \cite{A1}, \cite{AK10}, \cite{Bor}, \cite{Mas1}, \cite{Mas2}, \cite{R1}, \cite{R2} 
and references in \cite{AK10}).

In our problem, however, it does not give a sufficient tool to prove the main theorem.
In fact, there occur the cases where the Gauss equation admits a solution in codimension $1$, though 
the corresponding metric Lie algebras cannot be locally isometrically embedded into $\mathbb{R}^4$.
To show the non-existence of embeddings for such spaces, we must consider a higher order obstruction, 
which we call the \textit{derived Gauss equation} in this paper (for details, see Section~2).

This higher order obstruction was introduced for the first time by Kaneda (\cite[p.197]{K1}), though it 
does not serve as an obstruction when a Riemannian manifold is locally symmetric (for detail, see Section~5).

As a result, metric Lie algebras $(\g,\naiseki)$ not listed in Theorem~\ref{Main} are divided into two classes:
One is the class, not admitting a solution of the Gauss equation.
The other is the class, admitting a solution of the Gauss equation, but does not admit a solution of 
the derived Gauss equation.
The difference of these classes are explicitly examined in Sections~4 and 5.

To complete our classification, we must know all left invariant Riemannian metrics on a given 
three-dimensional Lie group in advance.
Fortunately, this problem is already settled by 
the second author and Tamaru (\cite{HT}), 
by considering Milnor-type theorems.
Applying their results, we show the non-existence of the solution of the Gauss equation or 
the derived Gauss equation for the remaining cases individually.
We may say that our results are a good application of Milnor-type theorems.

By Janet-Cartan's theorem we already know that any three-dimensional metric Lie algebra 
$(\g,\naiseki)$ can be locally isometrically embedded into the six-dimensional Euclidean space 
$\mathbb{R}^6$.
Therefore, our next problem is to consider the existence or non-existence of local isometric embeddings 
into the five-dimensional Euclidean space $\mathbb{R}^5$ for the spaces not listed in Theorem~\ref{Main}.
We will treat this problem in the forthcoming papers.

The contents of this paper is as follows.
In Section~2, we first recall some fundamental facts on local isometric embeddings, following Kaneda 
and Tanaka (\cite{KT}), Kaneda (\cite{K1}). 
We emphasize that the Gauss equation and the derived Gauss equation are the consequences of the integrability 
conditions of differential equations of local isometric embeddings.
In Section~3, we introduce Milnor frames and Milnor-type theorems, and review the results for 
the case of three-dimensional Lie algebras.
In Sections~4 and 5 we discuss the solvability of the Gauss equation and the derived Gauss equation for 
each metric Lie algebra $(\g,\naiseki)$.

\bigskip
 
{\bf Acknowledgement.}
The authors would like to thank Eiji Kaneda and Hiroshi Tamaru for helpful comments and valuable discussions. 
The first author was supported by JSPS KAKENHI Grant Number 16K05132, and the second author was supported 
by JSPS KAKENHI Grant Number 16K17063.

\section{Differential equations associated with isometric embeddings}
In this section, we recall fundamental facts on local isometric embeddings, 
and introduce a framework for studying isometric embedding problems in terms of systems of differential 
equations following \cite{K1, KT}.

Let $(M,g)$ be an $n$-dimensional Riemannian manifold, $\nabla$ be the covariant differentiation 
associated with the Levi-Civita connection of $(M,g)$, and $R$ be the curvature tensor field of type $(1,3)$ 
on $(M,g)$. 
For a $C^{\infty}$ function $f$ on $M$, and tangent vectors $x_1, \ldots, x_k $ at an arbitrary point 
$p$ of $M$, we define a scalar $\nabla^k_{(x_1, \ldots, x_k)} f \in \mathbb{R}$ by     
$\nabla^k_{(x_1, \ldots, x_k)} 
f := (\nabla \cdots \nabla) f (x_1, \ldots, x_k)$ where $(\nabla \cdots \nabla) f$ is the 
\textit{k-th covariant derivative of} $f$.
Let $F=(f^1, \ldots, f^m)$ be a $C^{\infty}$-mapping of $M$ into the $m$-dimensional Euclidean space 
$\mathbb{R}^m$.
We define the \textit{$\mathit{k}$-th covariant derivative} of $F$ by 
\begin{align*}
\nabla_{(x_1, \cdots, x_k)}^k F:=(\ldots, \nabla_{(x_1, \cdots, x_k)}^k f^i, \ldots ) \in \mathbb{R}^m.  
\end{align*}
Then the following equations hold:
\begin{align}
\label{int_1} \nabla_{(x_1, x_2)}^2 F & = \nabla_{(x_2, x_1)}^2 F,\\ 
\label{int_2} \nabla_{(x_1, x_2, x_3)}^3 F & = \nabla_{(x_1, x_3, x_2)}^3 F, \\
\label{int_3} \nabla_{(x_1, x_2, x_3)}^3 F & = \nabla_{(x_2, x_1, x_3)}^3 F - \nabla_{R(x_1, x_2)x_3}^1 F.
\end{align}
The above conditions are called the \textit{integrability conditions}.
In particular, the last equation~(\ref{int_3}) is called the \textit{Ricci formula}.
 
Let $\naiseki_{\mathbb{R}^m}$ denote the standard inner product on the $m$-dimensional Euclidean 
space $\mathbb{R}^m$.
An embedding $F=(f^1, \ldots , f^m): M \rightarrow \mathbb{R}^m$ is called an \textit{isometric embedding} if 
\begin{align*}
\langle \mathit{d}F, \mathit{d}F \rangle_{\mathbb{R}^m} = g
\end{align*}
holds.
Clearly the inequality $m \geq n$ holds if an isometric embedding $F$ exists.

In this paper we consider only ``local" isometric embeddings, i.e., 
we assume that an embedding $F$ is defined only on a sufficiently small open neighborhood 
of a given point $p$ of $M$.

\begin{thm}[\cite{K1, KT}]\label{TK}
Assume that $F$ is a local isometric embedding of $(M,g)$ into $\mathbb{R}^m$. 
Then, for each point $p \in M$, and any vectors $x_1, x_2, x_3,x_4, x_5 \in T_p M$, 
$F$ satisfies the following:
\begin{enumerate}
\item \rule{0cm}{0.5cm} $\langle \nabla_{x_1}^1 F, \ \nabla_{x_2}^1 F \rangle_{\mathbb{R}^m} = g(x_1,x_2)$,
\item \rule{0cm}{0.5cm} $\langle \nabla_{(x_1, x_2)}^2 F, \ \nabla_{x_3}^1 F \rangle_{\mathbb{R}^m} = 0$,
\item \rule{0cm}{0.5cm} $\langle \nabla_{(x_1, x_2, x_3)}^3 F, \ \nabla_{x_4}^1 F \rangle_{\mathbb{R}^m} 
+ \langle \nabla_{(x_2, x_3)}^2 F, \ \nabla_{(x_1, x_4)}^2 F \rangle_{\mathbb{R}^m} = 0$,
\item \rule{0cm}{0.5cm} $\langle \nabla_{(x_1, x_3)}^2 F, \ \nabla_{(x_2, x_4)}^2 F \rangle_{\mathbb{R}^m} 
- \langle \nabla_{(x_1, x_4)}^2 F, \ \nabla_{(x_2, x_3)}^2 F \rangle_{\mathbb{R}^m} 
= R(x_1, x_2, x_3, x_4)$,
\item \rule{0cm}{0.5cm} $\langle \nabla_{(x_5, x_1, x_3)}^3 F, \ \nabla_{(x_2, x_4)}^2 F \rangle _{\mathbb{R}^m}
+ \langle \nabla_{(x_1, x_3)}^2 F, \ \nabla_{(x_5, x_2, x_4)}^3 F \rangle_{\mathbb{R}^m} \\
\rule{0cm}{0.5cm} \quad - \langle \nabla_{(x_5, x_1, x_4)}^3 F, \ \nabla_{(x_2, x_3)}^2 F \rangle_{\mathbb{R}^m} 
- \langle \nabla_{(x_1, x_4)}^2 F, \ \nabla_{(x_5, x_2, x_3)}^3 F \rangle_{\mathbb{R}^m} \\
\rule{0cm}{0.5cm} \quad \quad = (\nabla R)(x_5,x_1, x_2, x_3, x_4)$.
\end{enumerate} 
\end{thm}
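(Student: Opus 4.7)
The plan is to derive the five identities in the stated order by successive covariant differentiation, using the isometry condition as input and the integrability relations (\ref{int_1})--(\ref{int_3}) as the algebraic glue.

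Identity (1) is simply a reformulation of $\langle dF, dF\rangle_{\mathbb{R}^m} = g$ in terms of first covariant derivatives. For (2), I would fix a point $p \in M$, extend the vectors $x_i$ to local vector fields with vanishing covariant derivatives at $p$, and differentiate (1) along $x_3$; since $\nabla g = 0$, the Leibniz rule yields
\begin{equation*}
\langle \nabla^2_{(x_3, x_1)} F, \nabla^1_{x_2} F \rangle_{\mathbb{R}^m} + \langle \nabla^1_{x_1} F, \nabla^2_{(x_3, x_2)} F \rangle_{\mathbb{R}^m} = 0
\end{equation*}
at $p$. Setting $B(u, v, w) := \langle \nabla^2_{(u, v)} F, \nabla^1_w F \rangle_{\mathbb{R}^m}$, this relation says $B$ is antisymmetric in $(v, w)$; combined with the symmetry $B(u, v, w) = B(v, u, w)$ coming from (\ref{int_1}), a short permutation argument (symmetric in one overlapping pair, antisymmetric in another, hence zero) forces $B \equiv 0$, which is (2).

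For (3), I would differentiate (2) along a direction $x_4$ in the same fashion; the Leibniz rule applied to the single inner product produces two terms which, after relabeling, are exactly the two summands in (3). The Gauss equation (4) then arises by antisymmetrizing (3) in the pair $(x_1, x_2)$: the difference of the two third-derivative terms reads $\langle \nabla^3_{(x_1, x_2, x_3)} F - \nabla^3_{(x_2, x_1, x_3)} F, \nabla^1_{x_4} F \rangle_{\mathbb{R}^m}$, which collapses via the Ricci formula (\ref{int_3}) and part (1) to $-g(R(x_1, x_2) x_3, x_4)$, while the remaining inner products of second derivatives assemble into the left-hand side of (4). Identification with $R(x_1, x_2, x_3, x_4)$ is then a matter of the paper's sign convention for the curvature tensor.

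The derived Gauss equation (5) is obtained by one more covariant differentiation of (4) along a new vector $x_5$. With the auxiliary vectors again extended parallelly at $p$, the Leibniz rule applied to each of the two inner products on the left-hand side of (4) produces precisely the four third-derivative times second-derivative terms in (5), while the right-hand side becomes $(\nabla R)(x_5, x_1, x_2, x_3, x_4)$ by the very definition of the covariant derivative of the $(0, 4)$-tensor $R$. The main obstacle throughout is not conceptual but purely notational bookkeeping of slot orderings and signs when invoking (\ref{int_1})--(\ref{int_3}); no geometric ingredient beyond the isometry condition, the compatibility $\nabla g = 0$, and the integrability conditions is required.
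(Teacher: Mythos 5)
Your derivation is correct, and it is essentially the standard argument: the paper itself gives no proof of Theorem~\ref{TK} but simply quotes it from Kaneda--Tanaka \cite{KT} and Kaneda \cite{K1}, where the identities are obtained exactly as you describe, by successive covariant differentiation of the isometry condition combined with the integrability conditions (\ref{int_1})--(\ref{int_3}). All the key steps check out --- the symmetric/antisymmetric permutation argument killing $\langle \nabla^2 F, \nabla^1 F\rangle$, the antisymmetrization of (3) in $(x_1,x_2)$ producing the Gauss equation via the Ricci formula and the sign convention $R(x_1,x_2,x_3,x_4)=-g(R(x_1,x_2)x_3,x_4)$, and the final Leibniz differentiation yielding (5) --- so nothing is missing.
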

\noindent
Note that $R$ and $\nabla R$ are the Riemannian curvature tensor of type $(0,4)$ and its covariant derivative 
respectively, which will be defined in the next section.
Recall that $\nabla^2 F$ is called the \textit{second fundamental form},
the equation (4) is the so-called \textit{Gauss equation} for the isometric embedding. 
In this paper we call the equation (5) the \textit{derived Gauss equation}.
The Gauss equation and the derived Gauss equation serve as obstructions to the existence of 
local isometric embeddings. 
The proof of our main theorem is given by investigating the solvability of the equations (4) and (5). 
Note that, by taking the covariant derivatives $\nabla^k F$ of $F$ for large $k$, we can theoretically obtain 
many other higher order obstructions.

\section{Preliminaries on curvatures and Milnor-type theorems}
In order to investigate the existence or non-existence of solutions of the Gauss equations and 
the derived Gauss equations, we calculate Riemannian curvature tensors $R$ and their covariant derivatives 
$\nabla R$ for all three-dimensional Lie groups equipped with any left-invariant Riemannian metric.
In this section, we calculate the curvatures and recall Milnor-type theorems for three-dimensional Lie algebras.

\subsection{Curvatures of three-dimensional Lie groups}
In this subsection, we calculate the curvatures of three-dimensional Lie groups equipped with 
a left-invariant Riemannian metric. 
Let $(G,g)$ be a simply-connected Lie group endowed with a left-invariant Riemannian metric, 
and $(\g, \naiseki)$ be the corresponding metric Lie algebra of $(G,g)$. 
We will discuss curvatures of $(G,g)$ by the metric Lie algebra $(\g, \naiseki)$ 
since bracket products and covariant derivatives can be calculated by metric Lie algebra 
under the identification of left-invariant vector fields of $(G,g)$ and the Lie algebra $\g$. 

Let us denote by $\{ e_1, e_2, e_3\}$ a basis of the Lie algebra $\g$. 
We identify each $e_i$ with a left-invariant vector field of $G$.
The Levi-Civita connection 
$\nabla: \g \times \g \longrightarrow \g$ is given by
\begin{align*}
2 \langle \nabla _{e_i} e_j,e_k \rangle = \langle [e_k,e_i],e_j \rangle + \langle e_i,[e_k,e_j] 
\rangle +\langle [e_i,e_j], e_k \rangle .
\end{align*}
We define the Riemannian curvature tensor $R$ of type $(1,3)$ by
\begin{align*}
R(e_i,e_j)e_k:=\nabla _{e_i} \nabla _{e_j} e_k - \nabla _{e_j} \nabla _{e_i} e_k - \nabla _{[e_i,e_j]} e_k.
\end{align*}
We also denote by $R$ the Riemannian curvature tensor of type $(0,4)$ which is given by
\begin{align*}
R (e_i,e_j,e_k,e_l):=- \langle R(e_i,e_j)e_k, e_l \rangle .
\end{align*}
The covariant derivative $\nabla R$ of $R$ is given by 
\begin{align*}
(\nabla R) (e_p,e_i,e_j,e_k,e_l):=&-R (\nabla _{e_p} e_i,e_j,e_k,e_l)-R (e_i,\nabla _{e_p} e_j,e_k,e_l)\\
&-R (e_i,e_j,\nabla _{e_p} e_k,e_l)-R (e_i,e_j,e_k,\nabla _{e_p} e_l).	
\end{align*}
Hereinafter, we write $R_{ijkl}:=R (e_i,e_j,e_k,e_l)$, and 
$\nabla_p R_{ijkl}:=(\nabla R) (e_p,e_i,e_j,e_k,e_l)$ for short.

\begin{lem}\label{R}
Let $\g$ be a three-dimensional Lie algebra, and $\naiseki$ be an inner product on $\g$. 
Suppose that there exist $a,b,c,d,t \in \mathbb{R}$ satisfying $(a+d)t=0$ and 
an orthonormal basis 
$\{ e_1, e_2, e_3 \}$ 
of $\g$ with respect to $\naiseki$ 
such that the bracket relations are given by 
\begin{align*} 
[e_1, e_2] = a e_2 + 2b e_3, \quad [e_1, e_3] = 2c e_2 + d e_3, \quad [e_2, e_3]=2t e_1.
\end{align*} 
Then, the Riemannian curvature tensors of type $(0,4)$ are given by
\begin{align*}
& R_{1212} 
= - (a^2 + 3b^2 -c^2+2b c-t^2-2 (b+c)t), \\
& R_{1313}  
= - (-b^2 +3c^2+d^2+2b c-t^2+2(b+c)t), \\ 
& R_{2323}  
= b^2 +c^2 -a d +2b c-3t^2+2(b-c)t ,\\
& R_{1213}  
= - 2(a c+b d+ a t), \\ 
& R_{1223}  
= 0, \\ 
& R_{1323}  
= 0.
\end{align*}  
Here the condition $(a+d)t=0$ is necessary in order that $\g$ is a Lie algebra.
\end{lem}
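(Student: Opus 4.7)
The plan is to compute in three stages: (i) check that the Jacobi identity forces the constraint $(a+d)t=0$; (ii) determine the Levi-Civita connection coefficients $\nabla_{e_i}e_j$ via the Koszul formula displayed in the text; (iii) assemble the independent curvature components $R_{ijkl}$ from the definition $R(e_i,e_j)e_k=\nabla_{e_i}\nabla_{e_j}e_k-\nabla_{e_j}\nabla_{e_i}e_k-\nabla_{[e_i,e_j]}e_k$ and the pairing $R_{ijkl}=-\langle R(e_i,e_j)e_k,e_l\rangle$.

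For stage (i), the only nontrivial cyclic sum is
\[
[[e_1,e_2],e_3]+[[e_2,e_3],e_1]+[[e_3,e_1],e_2]=a[e_2,e_3]-d[e_3,e_2]=2(a+d)t\,e_1,
\]
since $[e_1,e_1]=0$ kills the middle term and the $b,c$-contributions vanish by antisymmetry of the bracket. Hence the relations define a Lie algebra exactly when $(a+d)t=0$. For stage (ii), the Koszul formula with an orthonormal basis reduces each coefficient $\langle\nabla_{e_i}e_j,e_k\rangle$ to a sum of three bracket inner products read directly off the table. The diagonal pieces are $\nabla_{e_1}e_1=0$, $\nabla_{e_2}e_2=ae_1$, and $\nabla_{e_3}e_3=de_1$, while the mixed pieces include $\nabla_{e_1}e_2=(b-c-t)e_3$ and $\nabla_{e_2}e_3=(b+c+t)e_1$; I would tabulate all nine values explicitly before proceeding to the curvature calculation.

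For stage (iii), under the Jacobi constraint the symmetries $R_{ijkl}=-R_{jikl}=-R_{ijlk}=R_{klij}$ leave precisely six independent components on a three-dimensional space, matching the list in the statement. Computing $R(e_1,e_2)e_1$, $R(e_1,e_3)e_1$, and $R(e_2,e_3)e_2$ and pairing with the remaining basis vector produces $R_{1212}$, $R_{1213}$, $R_{1313}$, and $R_{2323}$ directly. The identity $R_{1223}=0$ falls out from the observation that each of the three summands of $R(e_1,e_2)e_2$ lies in $\R e_1$. For $R_{1323}$ one computes $R(e_2,e_3)e_1$ and finds that $\nabla_{e_2}\nabla_{e_3}e_1$ and $\nabla_{e_3}\nabla_{e_2}e_1$ coincide exactly, while $\nabla_{[e_2,e_3]}e_1=2t\nabla_{e_1}e_1=0$, so $R(e_2,e_3)e_1=0$ and hence $R_{1323}=R_{2313}=0$. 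The only real obstacle is the bookkeeping in the cross term $R_{1213}=-2(ac+bd+at)$; the pair symmetry $R_{1213}=R_{1312}$ provides an excellent cross-check, and indeed the equality of the two resulting expressions is equivalent to $(a+d)t=0$, confirming that the Jacobi condition is exactly what the curvature computation demands.
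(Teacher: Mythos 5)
Your proposal is correct and follows essentially the same route as the paper: compute the Levi-Civita connection from the Koszul formula (your tabulated values $\nabla_{e_1}e_1=0$, $\nabla_{e_2}e_2=ae_1$, $\nabla_{e_3}e_3=de_1$, $\nabla_{e_1}e_2=(b-c-t)e_3$, $\nabla_{e_2}e_3=(b+c+t)e_1$ all agree with the paper's), then assemble the six independent components from the definition of $R$, invoking $(a+d)t=0$ where needed. Your added observations --- the one-line Jacobi computation and the remark that the pair symmetry $R_{1213}=R_{1312}$ holds precisely when $at=-dt$ --- are consistent with, and in the latter case identical to, the point the paper flags in its own proof, so they serve as cross-checks rather than a different method.
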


\begin{proof}
We first calculate the Levi-Civita connection $\nabla$. 
By direct calculations, we see that 
\begin{align*} 
\nabla_{e_1} e_1 &= 0 , \quad 
\nabla_{e_1} e_2 = (b-c-t) e_3 , \quad 
\nabla_{e_1} e_3 = -(b-c-t) e_2 , \\ 
\nabla_{e_2} e_1 &= - a e_2 - (b+c+t) e_3 ,\quad 
\nabla_{e_2} e_2 = a e_1 , \quad 
\nabla_{e_2} e_3 = (b+c+t) e_1 , \\
\nabla_{e_3} e_1 &= - (b+c-t) e_2 - d e_3 ,\quad 
\nabla_{e_3} e_2 = (b+c-t) e_1 ,\quad 
\nabla_{e_3} e_3 = d e_1 .
\end{align*} 

\noindent
One can thus calculate the Riemannian curvatures $R$. 
For $R(e_1, e_2) e_1$, 
we have 
\begin{align*} 
R(e_1, e_2) e_1 &= (a^2 + 3b^2 -c^2+2b c-t^2-2 (b+c)t) e_2 + 2(ac+bd+at) e_3 ,
\end{align*} 
and this yields that
\begin{align*}
  \left\{
    \begin{array}{ll}
R_{1212} 
&= -(a^2 + 3b^2 -c^2+2b c-t^2-2 (b+c)t), \\
R_{1213} 
&= -2(ac+bd+at).
    \end{array}
  \right.
\end{align*}
Similarly, we obtain 
\begin{align*}
R(e_1, e_3) e_1 &= 2(ac+bd-dt) e_2 + (-b^2 +3c^2+d^2+2b c-t^2+2(b+c)t) e_3, \\ 
R(e_2, e_3) e_2 &= -(b^2 +c^2-a d+2b c-3t^2+2(b-c)t) e_3,\\
R(e_1, e_3) e_2 &= -2(a c+b d-d t) e_1,  
\end{align*} 
here we use the condition $at=-dt$.  
We complete the proof.
\end{proof} 

Next we calculate $\nabla R$ for some cases.
By direct calculations, we obtain the following Lemmas.

\begin{lem}\label{R1}
Let $\g$ be a three-dimensional solvable Lie algebra, and $\naiseki$ be an inner product on $\g$. 
Suppose that there exist $a,b,c,d \in \mathbb{R}$ and 
an orthonormal basis 
$\{ e_1, e_2, e_3 \}$ 
of $\g$ with respect to $\naiseki$ 
such that the bracket relations are given by 
\begin{align*} 
[e_1, e_2] = a e_2 + 2b e_3, \quad [e_1, e_3] = 2c e_2 + d e_3.
\end{align*} 
Then, we have 
\begin{align*}
\nabla_{2} R_{1223} & = (b+c)(R_{1212}-R_{2323}) -aR_{1213},\\
\nabla_{2} R_{3123} & = a(R_{1313}-R_{2323}) -(b+c)R_{1213},\\
\nabla_{3} R_{1223} & = d(R_{1212}-R_{2323}) -(b+c)R_{1213},\\
\nabla_{3} R_{3123} & = (b+c)(R_{1313}-R_{2323}) -dR_{1213}.
\end{align*}  
\end{lem}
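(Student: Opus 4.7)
The plan is to apply the definition of $\nabla R$ recalled in Section~3, namely
\[
(\nabla R)(e_p, e_i, e_j, e_k, e_l) = -R(\nabla_{e_p} e_i, e_j, e_k, e_l) - R(e_i, \nabla_{e_p} e_j, e_k, e_l) - R(e_i, e_j, \nabla_{e_p} e_k, e_l) - R(e_i, e_j, e_k, \nabla_{e_p} e_l),
\]
to each of the four components $\nabla_2 R_{1223}$, $\nabla_2 R_{3123}$, $\nabla_3 R_{1223}$, $\nabla_3 R_{3123}$. Once the Levi-Civita connection is in hand this is a direct substitution, so the argument is bookkeeping rather than conceptual.

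First I would specialize the connection formulas computed in the proof of Lemma~\ref{R} to the present case $t=0$, which holds here because the bracket $[e_2,e_3]$ vanishes by hypothesis. This yields the nine values of $\nabla_{e_i} e_j$ in terms of $a, b, c, d$ alone. Next I would substitute these into the four summands of the formula above, one quintuple $(p,i,j,k,l)$ at a time. After substitution, each term becomes a scalar multiple of some $R_{\alpha\beta\gamma\delta}$, and the standard symmetries of $R$ (antisymmetry in the first two and last two slots, plus pair-exchange symmetry) reduce everything to the six components already listed in Lemma~\ref{R}. Two simplifications recur throughout: any term with a repeated index in an antisymmetric slot drops out, and among the mixed components only $R_{1213}$ contributes, since $R_{1223}=R_{1323}=0$ by Lemma~\ref{R}.

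To illustrate the mechanism for the first identity, using $\nabla_{e_2} e_1 = -ae_2 - (b+c)e_3$, $\nabla_{e_2} e_2 = ae_1$ and $\nabla_{e_2} e_3 = (b+c)e_1$, one finds
\[
\nabla_2 R_{1223} = -R(-ae_2-(b+c)e_3, e_2, e_2, e_3) - 0 - aR(e_1, e_2, e_1, e_3) - (b+c)R(e_1, e_2, e_2, e_1),
\]
which collapses via $R_{3223}=-R_{2323}$ and $R_{1221}=-R_{1212}$ to $(b+c)(R_{1212}-R_{2323}) - aR_{1213}$. The remaining three identities follow by the same procedure: each produces a linear combination of $R_{1212}$, $R_{1313}$, $R_{2323}$ and $R_{1213}$ whose coefficients are built from $a$, $d$, and $b+c$, and the asymmetry between the $a$/$d$ roles in the four statements mirrors the asymmetry between $\nabla_{e_2} e_1$ and $\nabla_{e_3} e_1$ (resp.\ $\nabla_{e_2} e_2$ and $\nabla_{e_3} e_3$) in the connection table.

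The only real obstacle is avoiding sign errors in the repeated use of the antisymmetries of $R$; there is no deeper difficulty. Since the setting is $t=0$, no use is needed of the constraint $(a+d)t=0$, and the four identities emerge in parallel from the same mechanical substitution.
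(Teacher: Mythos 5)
Your proposal is correct and is exactly the "direct calculation" the paper invokes without writing out: specializing the connection table from the proof of Lemma~\ref{R} to $t=0$ and substituting into the definition of $\nabla R$, then reducing via the curvature symmetries. Your sample computation of $\nabla_2 R_{1223}$ checks out, and the other three identities follow by the same bookkeeping.
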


\begin{lem}\label{R2}
Let $\g$ be a three-dimensional simple Lie algebra, and $\naiseki$ be an inner product on $\g$. 
Suppose that there exist $u,v \in \mathbb{R}$ and 
an orthonormal basis 
$\{ e_1, e_2, e_3 \}$ 
of $\g$ with respect to $\naiseki$ 
such that the bracket relations are given by 
\begin{align*} 
[e_1, e_2] = \frac{-u+v}{2} \, e_3, \quad [e_3, e_1] = \frac{u+v}{2} \, e_2, \quad [e_2, e_3] = e_1.
\end{align*} 
Then, we have 
\begin{align*}
\nabla_{1} R_{1213} & = \frac{1}{2}u(v-1)^2,\\
\nabla_{2} R_{1223} & = \frac{1}{4}(u-1)^2(u-v+2),\\
\nabla_{3} R_{2313} & = -\frac{1}{4}(u+1)^2(u+v-2).\\
\end{align*}  
\end{lem}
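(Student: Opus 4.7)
The plan is to specialize Lemma~\ref{R} to this bracket structure and then apply the definition of $\nabla R$ entry by entry. Reading off the parameters of Lemma~\ref{R}, I would set
\begin{align*}
a = d = 0, \qquad b = \tfrac{v-u}{4}, \qquad c = -\tfrac{u+v}{4}, \qquad t = \tfrac{1}{2},
\end{align*}
so the compatibility condition $(a+d)t = 0$ holds automatically. Substituting these into the formulas of Lemma~\ref{R} expresses all six components $R_{ijkl}$ as polynomials in $u,v$; in particular $R_{1223} = R_{1323} = 0$, and only the diagonal-type entries $R_{1212}, R_{1313}, R_{2323}, R_{1213}$ are nontrivial.

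Next I would invoke the explicit Levi-Civita formulas used in the proof of Lemma~\ref{R}. Evaluating $b-c-t = \tfrac{v-1}{2}$, $b+c+t = \tfrac{1-u}{2}$, and $b+c-t = -\tfrac{u+1}{2}$, the only non-zero Christoffel-like entries turn out to be
\begin{align*}
\nabla_{e_1} e_2 &= \tfrac{v-1}{2}\, e_3, & \nabla_{e_1} e_3 &= -\tfrac{v-1}{2}\, e_2,\\
\nabla_{e_2} e_1 &= \tfrac{u-1}{2}\, e_3, & \nabla_{e_2} e_3 &= \tfrac{1-u}{2}\, e_1,\\
\nabla_{e_3} e_1 &= \tfrac{u+1}{2}\, e_2, & \nabla_{e_3} e_2 &= -\tfrac{u+1}{2}\, e_1.
\end{align*}

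Then for each of the three target entries I would expand the Leibniz-type definition of $\nabla R$ given in Section~3. In every case, $\nabla_{e_p} e_p = 0$ kills two of the four terms, and the skew-symmetries $R_{abcd} = -R_{bacd} = -R_{abdc}$ collapse the remaining two into a single multiple of a curvature difference:
\begin{align*}
\nabla_1 R_{1213} &= \tfrac{v-1}{2}\,(R_{1212} - R_{1313}),\\
\nabla_2 R_{1223} &= -\tfrac{u-1}{2}\,(R_{1212} - R_{2323}),\\
\nabla_3 R_{2313} &= \tfrac{u+1}{2}\,(R_{1313} - R_{2323}).
\end{align*}

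The main obstacle is the final algebraic step: verifying that the three curvature differences factor as
\begin{align*}
R_{1212} - R_{1313} &= u(v-1),\\
R_{1212} - R_{2323} &= -\tfrac{1}{2}(u-1)(u-v+2),\\
R_{1313} - R_{2323} &= -\tfrac{1}{2}(u+1)(u+v-2).
\end{align*}
To locate these factorizations efficiently, I would test the polynomial differences at $u = \pm 1$ and $v = 1$ (the natural degenerations of the Milnor parameterization of three-dimensional unimodular Lie algebras) to find the linear roots, then read off the cofactors by one step of polynomial division. Multiplying each difference by the corresponding connection coefficient in the display above produces exactly the squared factors $(v-1)^2$, $(u-1)^2$, $(u+1)^2$ appearing in the statement, giving the three asserted formulas.
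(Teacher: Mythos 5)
Your proposal is correct: the parameter identification $a=d=0$, $b=\tfrac{v-u}{4}$, $c=-\tfrac{u+v}{4}$, $t=\tfrac12$, the resulting connection coefficients, the reduction of each $\nabla_p R_{ijkl}$ to a connection coefficient times a curvature difference, and the three factorizations all check out and reproduce the stated formulas. The paper offers no written proof beyond ``by direct calculations,'' and your computation is precisely that direct calculation, so the approaches coincide.
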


\noindent
These Lemmas~3.2 and 3.3 are used in Section~$5$.

\subsection{Milnor frames and Milnor-type theorems}
In this subsection, we recall Milnor frames and Milnor-type theorems.
They are quite powerful tools to study Riemannian geometry of Lie groups equipped with 
left-invariant Riemannian metrics, also well applied to the problem of local isometric embeddings of Lie groups. 
We refer to \cite{Mil} for a deeper discussion of Milnor frames and \cite{HTT} for Milnor-type theorems.

First of all, we review Milnor's result. 
\begin{thm}[\cite{Mil}]\label{Mi}
Let $\g$ be a three-dimensional unimodular Lie algebra. 
For every inner product $\naiseki$ on $\g$, 
there exist an orthonormal basis $\{ e_1, e_2, e_3 \}$ with respect to $\naiseki$, and real numbers  
$\lambda_1, \lambda_2, \lambda_3 $ such that the bracket relations are given by 
\begin{align*}
[e_1, e_2]=\lambda_3 e_3, \ [e_2, e_3]=\lambda_1 e_1, \ [e_3, e_1]=\lambda_2 e_2.
\end{align*}
\end{thm}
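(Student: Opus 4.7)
The plan is to convert the Lie bracket of $\g$ into a single linear endomorphism of $\g$ by means of the cross product. Fix an orientation on $\g$, and let $\times : \g \times \g \to \g$ denote the cross product induced by $\naiseki$ and this orientation, which identifies $\Lambda^2 \g$ with $\g$. Since the bracket $[\cdot,\cdot]:\g\times\g \to \g$ is alternating, there exists a unique linear endomorphism $L:\g \to \g$ characterized by
$$[x,y] \;=\; L(x \times y), \qquad x,y \in \g.$$
Thus the entire Lie-algebra structure is encoded by $L$, and the problem reduces to finding a good orthonormal basis for $L$.

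The key step is to show that $\g$ being unimodular is equivalent to $L$ being self-adjoint with respect to $\naiseki$. I would choose an arbitrary positively oriented orthonormal basis $\{u_1,u_2,u_3\}$ and write $L u_i = \sum_j L_{ij}\, u_j$; a direct evaluation of the matrices of $\mathrm{ad}\, u_k$ in this basis, using $u_1\times u_2 = u_3$, $u_2\times u_3 = u_1$, $u_3\times u_1 = u_2$, yields
$$\mathrm{tr}(\mathrm{ad}\, u_1) = L_{32} - L_{23}, \quad \mathrm{tr}(\mathrm{ad}\, u_2) = L_{13} - L_{31}, \quad \mathrm{tr}(\mathrm{ad}\, u_3) = L_{21} - L_{12}.$$
Hence the unimodularity condition $\mathrm{tr}(\mathrm{ad}\, x) = 0$ for all $x \in \g$ is equivalent to $L_{ij} = L_{ji}$, i.e., to the self-adjointness of $L$. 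I expect this identification of unimodularity with self-adjointness to be the main technical hurdle, but it is a transparent finite computation once the setup is in place.

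Once $L$ is self-adjoint, the spectral theorem for real symmetric operators supplies a $\naiseki$-orthonormal eigenbasis $\{e_1, e_2, e_3\}$ with real eigenvalues $\lambda_1, \lambda_2, \lambda_3$, so that $L e_i = \lambda_i e_i$. After possibly swapping two basis vectors I may assume that $\{e_1, e_2, e_3\}$ is positively oriented, whence $e_2 \times e_3 = e_1$, $e_3 \times e_1 = e_2$, and $e_1 \times e_2 = e_3$. Applying $L$ to each of these identities and using its defining property gives exactly
$$[e_2, e_3] = \lambda_1 e_1, \quad [e_3, e_1] = \lambda_2 e_2, \quad [e_1, e_2] = \lambda_3 e_3,$$
which is the asserted bracket form and completes the proof.
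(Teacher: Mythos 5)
The paper offers no proof of this statement---it is quoted directly from Milnor \cite{Mil}---so there is nothing internal to compare against; your argument is correct and is in fact Milnor's original proof: encode the bracket as $[x,y]=L(x\times y)$ via the cross product, verify that unimodularity is equivalent to self-adjointness of $L$ through the trace identities $\mathrm{tr}(\mathrm{ad}\,u_1)=L_{32}-L_{23}$ and their cyclic analogues, and then diagonalize $L$ in an orthonormal basis. All steps check out, including the final orientation adjustment (swapping two eigenvectors merely permutes the $\lambda_i$, which is harmless).
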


\noindent
The above bases are nowadays called the \textit{Milnor frames}.
Note that, for each three-dimensional unimodular Lie algebra $\g$, the possible signatures of 
$\lambda_1, \lambda_2, \lambda_3$ are already determined (see \cite{Mil}). 
For simple cases, possible signatures are $(\lambda_1, \lambda_2, \lambda_3) = (+,+,-)$ when 
$\g=\mathfrak{sl}_2(\mathbb{R})$, and $(\lambda_1, \lambda_2, \lambda_3) =(+,+,+)$ when 
$\g=\mathfrak{so}(3)$.
From Theorem~\ref{Mi}, for each three-dimensional unimodular Lie algebra, any inner product 
on $\g$ can be examined by using at most three-parameters $\lambda_1, \lambda_2, \lambda_3$.

When $\g$ is simple, by considering ``up to scaling", we may assume $\lambda_1=1$ as follows.  
\begin{prop}\label{Ms}
\begin{enumerate} 
\item For every inner product $\naiseki$ on $\mathfrak{sl}_2(\mathbb{R})$, 
there exist $\lambda_2 >0$, $\lambda_3 <0$, $k > 0$, 
and an orthonormal basis $\{x_1, x_2, x_3\}$ of $\mathfrak{sl}_2(\mathbb{R})$ with respect to $k \naiseki$ 
such that the bracket relations are given by 
\begin{align*}
[x_1, x_2]=\lambda_3 x_3, \quad [x_2, x_3]=x_1, \quad [x_3, x_1]=\lambda_2 x_2.
\end{align*}
\item For every inner product $\naiseki$ on $\mathfrak{so}(3)$, 
there exist $\lambda_2, \lambda_3 >0$, $k > 0$, 
and an orthonormal basis $\{x_1, x_2, x_3\}$ of $\mathfrak{so}(3)$ with respect to $k \naiseki$ 
such that the bracket relations are given by 
\begin{align*}
[x_1, x_2]=\lambda_3 x_3, \quad [x_2, x_3]=x_1, \quad [x_3, x_1]=\lambda_2 x_2.
\end{align*}
\end{enumerate}
\end{prop}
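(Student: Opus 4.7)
The plan is to derive Proposition~\ref{Ms} as a direct consequence of Milnor's theorem (Theorem~\ref{Mi}) by a two-step reduction: first fix the sign pattern of the structure constants by permuting the Milnor frame, and then absorb one of the constants into the scale of the inner product.

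First I would apply Theorem~\ref{Mi} to the given inner product $\naiseki$ on $\g$ (with $\g = \mathfrak{sl}_2(\R)$ or $\mathfrak{so}(3)$) to obtain an orthonormal basis $\{e_1,e_2,e_3\}$ and real numbers $\lambda_1,\lambda_2,\lambda_3$ with
\begin{align*}
[e_1,e_2] = \lambda_3 e_3,\qquad [e_2,e_3]=\lambda_1 e_1,\qquad [e_3,e_1]=\lambda_2 e_2.
\end{align*}
By the discussion following Theorem~\ref{Mi}, the signature of $(\lambda_1,\lambda_2,\lambda_3)$ is $(+,+,-)$ up to permutation for $\mathfrak{sl}_2(\R)$ and $(+,+,+)$ for $\mathfrak{so}(3)$. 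Since any permutation of an orthonormal basis is again orthonormal, and the bracket relations above are cyclic in $(e_1,e_2,e_3)$, I can reorder the basis so that in the $\mathfrak{sl}_2(\R)$ case $\lambda_1>0$, $\lambda_2>0$, $\lambda_3<0$, and in the $\mathfrak{so}(3)$ case $\lambda_1,\lambda_2,\lambda_3>0$. In both cases $\lambda_1>0$, in particular $\lambda_1\neq 0$.

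Next I would rescale. For a constant $c>0$, set $x_i := c\,e_i$. Then with respect to the rescaled inner product $k\naiseki$, one has $k\langle x_i,x_j\rangle = kc^2\delta_{ij}$, so $\{x_1,x_2,x_3\}$ is orthonormal for $k\naiseki$ precisely when $k = 1/c^2$. Moreover the brackets transform as
\begin{align*}
[x_1,x_2] = c^2[e_1,e_2] = c\lambda_3 x_3,\quad [x_2,x_3] = c\lambda_1 x_1,\quad [x_3,x_1]=c\lambda_2 x_2.
\end{align*}
Choosing $c = 1/\lambda_1 > 0$ (equivalently $k = \lambda_1^2 > 0$) yields $c\lambda_1 = 1$, while the signs of $c\lambda_2$ and $c\lambda_3$ coincide with those of $\lambda_2$ and $\lambda_3$ since $c>0$. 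Renaming $c\lambda_2$ and $c\lambda_3$ as the new $\lambda_2$ and $\lambda_3$ gives exactly the bracket relations claimed in (1) and (2), with the correct sign conditions on $\lambda_2,\lambda_3$.

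There is no real obstacle here; the argument is essentially bookkeeping. The only subtle point is the permutation step for $\mathfrak{sl}_2(\R)$, where one must verify that relabeling the $e_i$ preserves the cyclic form of the brackets (which it does, possibly after an additional sign change of one basis vector), so that the negative structure constant can be placed in the $[x_1,x_2]$ slot as required.
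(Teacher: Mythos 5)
Your proposal is correct and matches the paper's own proof: both apply Milnor's theorem and then normalize $\lambda_1$ to $1$ by setting $x_i=(1/\lambda_1)e_i$ and $k=\lambda_1^2$, checking that the rescaled basis is orthonormal for $k\naiseki$ and that the brackets transform as claimed. The only cosmetic difference is that the paper takes the sign pattern $(+,+,-)$ already in the stated order from Milnor's classification, so it skips the permutation step you discuss.
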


\begin{proof}
We only prove (1). 
Take any inner product on $\mathfrak{sl}_2(\mathbb{R})$.
By Theorem~\ref{Mi}, there exist real numbers  
$\lambda^{\prime}_1, \lambda^{\prime}_2 >0$, $\lambda^{\prime}_3<0$, and an orthonormal basis 
$\{ e_1, e_2, e_3 \}$ with respect to $\naiseki$ such that the bracket relations are given by 
\begin{align*}
[e_1, e_2]=\lambda^{\prime}_3 e_3, \quad [e_2, e_3]=\lambda^{\prime}_1 e_1, \quad [e_3, e_1]=\lambda^{\prime}_2 e_2.
\end{align*}
By putting $\lambda_2:= (\lambda^{\prime}_2/\lambda^{\prime}_1)>0$, $\lambda_3:= 
(\lambda^{\prime}_3/\lambda^{\prime}_1)<0$, 
$k:=(\lambda^{\prime}_1)^2 >0$ and $\{ x_1, x_2, x_3 \}:= \{ (1/\lambda^{\prime}_1)e_1, 
(1/\lambda^{\prime}_1)e_2, (1/\lambda^{\prime}_1)e_3 \}$, 
one can easily see that $\{ x_1, x_2, x_3 \}$ is an orthonormal basis of $\mathfrak{sl}_2(\mathbb{R})$ 
with respect to $k\naiseki$.
By direct calculations, one has bracket relations as follows:
\begin{align*}
[x_1, x_2]&=[(1/\lambda^{\prime}_1)e_1, (1/\lambda^{\prime}_1)e_2]
=(1/\lambda^{\prime}_1)^2 \lambda^{\prime}_3 e_3 =\lambda_3 x_3, \\
[x_2, x_3]&= [(1/\lambda^{\prime}_1)e_2, (1/\lambda^{\prime}_1)e_3]=(1/\lambda^{\prime}_1)^2 \lambda^{\prime}_1 e_1=x_1, \\
[x_3, x_1]&=[(1/\lambda^{\prime}_1)e_3, (1/\lambda^{\prime}_1)e_1]=(1/\lambda^{\prime}_1)^2 \lambda^{\prime}_2 e_2=\lambda_2 x_2.
\end{align*}

\noindent
We complete the proof of (1).
The statement (2) can be verified in the same way.
\end{proof}

As for the three-dimensional solvable Lie algebras, all inner products can be studied by using 
at most one-parameter. 
In \cite{HTT}, the second author, Tamaru and Terada gave a procedure to construct an analogue 
of Milnor frames for any Lie algebra, which is called Milnor-type theorems.
The basic idea of Milnor-type theorems is based on the study of the moduli space of all 
left-invariant Riemannian metrics on a given Lie group (see \cite{KTT}).

For some time let $G$ be an $n$-dimensional Lie group, and $\g$ be its Lie algebra. 
We denote by $\M$ the set of all left-invariant Riemannian metrics on $G$, 
which can be naturally identified with 
\begin{align*}
\M := \{ \naiseki \mid \mbox{an inner product on $\g$} \} .  
\end{align*}
We identify $\g$ with $\mathbb{R}^n$ as vector spaces from now on.
Then, since $\GL$ acts transitively on $\M$ by 
\begin{align*}
g.\langle \cdot, \cdot \rangle := \langle g^{-1} (\cdot), g^{-1} (\cdot) \rangle , 
\end{align*}
we have an identification 
\begin{align*}
\M \cong \GL / \OO . 
\end{align*}
We define an equivalence relation ``isometric up to scaling" on $\M$.
\begin{defi}\label{I}
Two inner products $\naiseki_1$ and $\naiseki_2$ on $\g$ 
are said to be \textit{isometric up to scaling} 
if there exist $k > 0$ and an automorphism 
$f: \g \rightarrow \g$ 
such that $\langle \cdot , \cdot \rangle_1 = k \langle f (\cdot) , f (\cdot) \rangle_2$.  
\end{defi}
We will denote by $[\naiseki]$ the equivalence class of the inner product $\naiseki$.
We call the quotient space of $\M$ by the equivalence relation ``isometric up to scaling" the 
\textit{moduli space of left-invariant Riemannian metrics} and express it as 
\begin{align*}
\PM(\g) &:= \A \backslash \M \cong \A \backslash \GL / \OO \\
&= \{ [\naiseki] \mid \naiseki \in \M \}.
\end{align*}
Here 
$$
\A := \{ (c \cdot \mathrm{id}) \varphi \in \GL \mid c \in \R \backslash \{ 0 \}, \varphi \in \Aut \}.
$$
By considering the set of representatives of $\PM(\g)$, one can obtain an analogue of Milnor frames. 
For each Lie group, a procedure to construct them is called the Milnor-type theorem.
For a deeper discussion of Milnor-type theorems, we refer to \cite{HTT}. 

\begin{rem}
Let $G$ be a simply-connected Lie group with Lie algebra $\g$.
If two inner products $\naiseki_1$ and $\naiseki_2$ on $\g$ are isometric up to scaling in the sense of 
Definition~\ref{I}, then the correspondence left-invariant Riemannian metrics $g_1$ and $g_2$ on $G$ are 
also isometric up to scaling as Riemannian metrics. For details, we refer to \cite[Remark2.3]{KTT}.
\end{rem}

Now we return to the three-dimensional case.
For each three-dimensional solvable Lie algebra in Table~$1$, we recall Milnor-type theorems. 
We summarize the results in the following proposition.
For more details, we refer to \cite{HT}.

\begin{prop}[\cite{HT}]\label{M1}
\begin{enumerate}
\item For every inner product $\naiseki$ on $\mathfrak{r}_3$, 
there exist $\lambda > 0$, $k > 0$ and an orthonormal basis $\{x_1, x_2, x_3\}$ of $\mathfrak{r}_3$ 
with respect to $k \naiseki$ 
such that the non-zero bracket relations are given by 
\begin{align*}
[x_1,x_2] = x_2 + 2 \lambda x_3 , \quad [x_1,x_3] = x_3 . 
\end{align*} 
\item For every inner product $\naiseki$ on $\mathfrak{r}_{3, \alpha}$ $(-1 \leq \alpha <1)$, 
there exist $\lambda \in \mathbb{R}$, $k > 0$, 
and an orthonormal basis $\{x_1, x_2, x_3\}$ of $\mathfrak{r}_{3, \alpha}$ with respect to $k \naiseki$ 
such that 
the non-zero bracket relations are given by 
\begin{align*}
[x_1 , x_2] = x_2 + 2 \lambda (\alpha-1)x_3 , \quad [x_1 , x_3] = \alpha x_3 . 
\end{align*} 
\item For every inner product $\naiseki$ on $\mathfrak{r}^{\prime}_{3, \alpha}$ $(\alpha \geq 0)$, 
there exist $\lambda \geq 1$, $k > 0$, 
and an orthonormal basis $\{x_1, x_2, x_3\}$ of $\mathfrak{r}^{\prime}_{3, \alpha}$ with respect to $k \naiseki$ 
such that  
the non-zero bracket relations are given by 
\begin{align*}
[x_1,x_2] = \alpha x_2 - \lambda x_3 , \quad [x_1,x_3] = \frac{1}{\lambda} x_2 + \alpha x_3 . 
\end{align*} 
\end{enumerate}
\end{prop}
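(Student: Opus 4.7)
The plan is to follow the Milnor-type framework recalled above. For each of the three solvable Lie algebras I would fix the reference basis $\{e_1,e_2,e_3\}$ of Table~$1$, identify $\g$ with $\R^3$ as a vector space, and analyze the orbit space $\PM(\g) = \A \backslash \M$. Since $\A$ acts transitively on each equivalence class $[\naiseki]$, proving each item amounts to three steps: (i) determine $\Aut(\g)$ explicitly; (ii) for any inner product $\naiseki$, exhibit $k>0$ and $\varphi \in \Aut(\g)$ such that the rescaled pull-back $k\,\langle \varphi(\cdot),\varphi(\cdot)\rangle$ admits an orthonormal basis $\{x_1,x_2,x_3\}$ of the claimed form; and (iii) verify the stated bracket relations in the new basis by applying $\varphi$ to the relations in Table~$1$ and expanding.

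The key geometric input in each case is a canonical $\Aut(\g)$-invariant subspace of $\g$, which provides enough rigidity to cut the six-parameter family $\M = \GL / \OO$ down to a one-parameter slice modulo $\A$. For $\mathfrak{r}_3$, the derived ideal $\mathrm{span}\{e_2,e_3\}$ and the unique eigenline $\R e_3$ of $\mathrm{ad}(e_1)|_{[\g,\g]}$ are invariant, so after using $\Aut(\g)$ and scaling to orthonormalize the frame on the derived ideal and to make $x_1$ orthogonal to it, one off-diagonal parameter $\lambda>0$ survives in the $e_3$-component of $[x_1,x_2]$. For $\mathfrak{r}_{3,\alpha}$ with $\alpha<1$, the two eigenlines $\R e_2$ and $\R e_3$ of $\mathrm{ad}(e_1)$ are distinct and canonically defined; automorphisms preserve them (with an extra swap only at $\alpha=-1$), and an analogous reduction leaves a single parameter $\lambda \in \R$. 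For $\mathfrak{r}'_{3,\alpha}$, the restriction of $\mathrm{ad}(e_1)$ to the derived ideal has the form $\alpha I + J$ with $J$ a rotation, so its centralizer in $\mathrm{GL}(\mathrm{span}\{e_2,e_3\})$ is the conformal rotation group; this freedom diagonalizes $\naiseki|_{[\g,\g]}$ and leaves a single eccentricity parameter $\lambda \geq 1$.

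The main obstacle is precisely the orbit analysis: one must compute $\Aut(\g)$ explicitly in each of the three cases and then show both that the claimed range of $\lambda$ is attained and that it cannot be further reduced. The delicate point in (3) is explaining why $\lambda \geq 1$ rather than merely $\lambda > 0$; this comes from a residual $\mathbb{Z}/2$ symmetry $\lambda \leftrightarrow 1/\lambda$ implemented by an orientation-reversing automorphism of the invariant plane, which identifies the two outer halves of the $\lambda$-line. A similar care is needed to handle the extra involutive automorphism in (2) when $\alpha=-1$. Once these automorphism groups are pinned down, the construction of $(k,\varphi)$ for a given $\naiseki$ reduces to Gram--Schmidt adapted to the invariant subspaces, and the bracket relations in the new basis follow by direct substitution, as carried out in detail in~\cite{HT}.
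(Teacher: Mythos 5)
The paper does not prove this proposition at all: it is imported verbatim from \cite{HT} (``we recall Milnor-type theorems\dots For more details, we refer to \cite{HT}''), so there is no internal argument to compare against. Your outline --- compute $\Aut$, use the $\A$-action on $\M\cong\GL/\OO$ together with the canonical $\mathrm{ad}(e_1)$-invariant subspaces of the derived ideal to cut down to a one-parameter slice, then read off the brackets in the adapted orthonormal frame --- is indeed the strategy of \cite{HT,HTT}, and your identification of the relevant invariant lines in cases (1) and (2), including the extra involution at $\alpha=-1$, is correct. As written, though, it is a plan rather than a proof: the actual determination of $\Aut(\g)$ and the Gram--Schmidt reduction are exactly the content being asserted, and you ultimately defer them back to \cite{HT}, just as the paper does.

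One step you do spell out is wrong. In case (3) you attribute the normalization $\lambda\geq 1$ to ``a residual $\mathbb{Z}/2$ symmetry $\lambda\leftrightarrow 1/\lambda$ implemented by an orientation-reversing automorphism of the invariant plane.'' For $\alpha>0$ no such automorphism exists: any $\varphi\in\Aut(\mathfrak{r}'_{3,\alpha})$ preserves the derived ideal and satisfies $A\,(\alpha I+J)\,A^{-1}=c\,(\alpha I+J)$ with $A=\varphi|_{[\g,\g]}$; comparing eigenvalues $\alpha\pm i$ forces $c=1$ when $\alpha>0$, so $A$ lies in the centralizer $\{aI+bJ\}$ of $J$, whose elements have determinant $a^2+b^2>0$. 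The reflections you need appear only at $\alpha=0$. The identification $\lambda\leftrightarrow 1/\lambda$ instead comes from the right $\OO$-factor in $\A\backslash\GL/\OO$, i.e.\ from relabelling the orthonormal frame: setting $y_1=x_1$, $y_2=x_3$, $y_3=-x_2$ turns the bracket relations with parameter $\lambda$ into the same relations with parameter $1/\lambda$ for the \emph{same} inner product. The conclusion $\lambda\geq 1$ is therefore correct, but the mechanism you give for it would fail on most of the parameter range covered by item (3).
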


\begin{rem}
When $\g$ is isomorphic to $\mathbb{R}^3$, $\mathfrak{h}_3$ or $\mathfrak{r}_{3, 1}$, 
the inner product on $\g$ is unique up to isometry and scaling 
(see \cite{KTT, Lau03}).
We also remark that we can apply Lemma~\ref{R} to the above solvable cases by putting $t=0$ in 
Lemma~\ref{R}.
\end{rem}

\section{The Gauss equation}
Let $(G,g)$ be a three-dimensional Lie group equipped with a left-invariant Riemannian metric, 
and $(\g, \naiseki)$ be the corresponding metric Lie algebra of $(G,g)$. 
In the following arguments we only consider local isometric embeddings $F$ defined in 
a sufficiently small neighborhood of the unit element $e$ of $G$ into the four-dimensional Euclidean 
space $\mathbb{R}^4$.
Here and hereinafter, we will symbolically write this embedding as $F: (\g, \naiseki) \rightarrow 
\mathbb{R}^4$.
We remark that the existence or non-existence of local isometric embeddings of left-invariant 
Riemannian metrics is completely determined by the infinitesimal character of $(G,g)$.
In addition, also remark that the existence or non-existence is unchanged if we replace the inner product $\naiseki$ 
with $k \naiseki$ for any $k>0$.

In this section, as a first step, we investigate the solutions of the Gauss equation in codimension $1$. 
 
\subsection{Preliminary}

Let $(\g, \naiseki)$ be a three-dimensional metric Lie algebra with an orthonormal basis 
$\{ x_1, x_2, x_3 \}$, and $F$ be a local isometric embedding of $(\g,\naiseki)$ into $\mathbb{R}^4$. 
Recall that $F$ satisfies the Gauss equation:
\begin{align*}
\langle \nabla^2_{(x_i,x_k)} F, \nabla^2_{(x_j,x_l)} F \rangle_{\mathbb{R}^4} 
- \langle \nabla^2_{(x_i,x_l)} F, \nabla^2_{(x_j,x_k)} F \rangle_{\mathbb{R}^4} 
= R (x_i, x_j, x_k, x_l)
\end{align*}
for each $i$, $j$, $k$, $l \in \{ 1,2,3 \}$.

Let $N$ be a unit normal vector of the embedding $F$ at $e$.
The normal space of $F$ at $e$ is spanned by $N$.
Recall that $(\nabla ^2 F)_e$ is the second fundamental form of $F$ at $e$. 
Put $h_{ij}:=\langle (\nabla^2_{(x_i, x_j)} F)_e, N \rangle$.
Then from the integrability condition~(\ref{int_1}) we have $h_{ij} = h_{ji}$.
Since the tangential part of $(\nabla^2_{(x_i, x_j)} F)_e$ is $0$, the Gauss equation in codimension $1$ 
at $e$ can be expressed as 
\begin{eqnarray}
\left\{
\begin{array}{l}\label{Gauss_eq}
h_{11}h_{22}-h_{12}^2=R_{1212}, \\
h_{11}h_{33}-h_{13}^2=R_{1313}, \\
h_{22}h_{33}-h_{23}^2=R_{2323}, \\
h_{11}h_{23}-h_{13}h_{12}=R_{1213}, \\
h_{12}h_{23}-h_{13}h_{22}=R_{1223}, \\
h_{12}h_{33}-h_{13}h_{23}=R_{1323}. 
\end{array}
\right.
\end{eqnarray}

In the following arguments we consider (\ref{Gauss_eq}) as abstract algebraic equations on 
$h_{ij}$ for a given curvature tensor $R$.
Namely, we do not assume the existence of local isometric embeddings $F$ in advance.
The equation (\ref{Gauss_eq}) serves as an obstruction to the existence of local isometric embeddings $F$.
In fact, if we can show the non-existence of a solution of (\ref{Gauss_eq}) for a given $R$, we can conclude that 
$(\g, \naiseki)$ does not admit a local isometric embedding into $\mathbb{R}^4$.
For left-invariant Riemannian metrics, the solvability of the Gauss equation does not depend on the choice of 
reference points, and hence we have only to consider the Gauss equation (\ref{Gauss_eq}) at $e$, 
concerning its solvability.

The Gauss equation in codimension $1$ for general $n$-dimensional Riemannian manifolds was deeply investigated 
by Weise \cite{W} and Thomas \cite{T} for $n \geq 3$.
In fact they have shown the following theorem concerning the solvability and the inverse 
formula of the Gauss equation in codimension $1$.
For our purpose, we here state their results for only three-dimensional case.

\begin{thm}[\cite{W}, \cite{T}]\label{Thomas}
Let $(M,g)$ be a three-dimensional Riemannian manifold.
If the Gauss equation {\rm (\ref{Gauss_eq})} admits a solution, then the following inequality holds:
\begin{align*}
T:=\begin{vmatrix}
R_{1212} & R_{1213} & R_{1223}\\
R_{1213} & R_{1313} & R_{1323}\\
R_{1223} & R_{1323} & R_{2323}
\end{vmatrix} \geq 0. 
\end{align*}
Conversely, if the inequality $T>0$ holds, then the Gauss equation {\rm (\ref{Gauss_eq})} has a solution.
Moreover its solution is uniquely determined up to sign explicitly in the following form $(\varepsilon= \pm 1)$:
\begin{align*}
h_{11} = \frac{\varepsilon}{\sqrt{T}}
\begin{vmatrix}
R_{1212} & R_{1213} \\
R_{1312} & R_{1313} 
\end{vmatrix}, \quad 
h_{12} = \frac{\varepsilon}{\sqrt{T}}
\begin{vmatrix}
R_{1212} & R_{1223} \\
R_{1312} & R_{1323} 
\end{vmatrix}, \\
\rule{0cm}{0.9cm} h_{13} = \frac{\varepsilon}{\sqrt{T}}
\begin{vmatrix}
R_{1213} & R_{1223} \\
R_{1313} & R_{1323} 
\end{vmatrix}, \quad 
h_{22} = \frac{\varepsilon}{\sqrt{T}}
\begin{vmatrix}
R_{1212} & R_{1223} \\
R_{2312} & R_{2323} 
\end{vmatrix}, \\
\rule{0cm}{0.9cm} h_{23} = \frac{\varepsilon}{\sqrt{T}}
\begin{vmatrix}
R_{1213} & R_{1223} \\
R_{2313} & R_{2323} 
\end{vmatrix}, \quad 
h_{33} = \frac{\varepsilon}{\sqrt{T}}
\begin{vmatrix}
R_{1313} & R_{1323} \\
R_{2313} & R_{2323} 
\end{vmatrix}. \\
\end{align*}
\end{thm}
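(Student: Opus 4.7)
The plan is to recast the six scalar Gauss equations in (\ref{Gauss_eq}) as a single matrix identity for the symmetric $3\times 3$ second fundamental form matrix $H=(h_{ij})$, and then to exploit standard properties of the adjugate of a $3\times 3$ matrix. By the curvature symmetries $R_{ijkl}=-R_{jikl}=R_{klij}$, each independent component of $R$ is the $2\times 2$ minor $h_{ik}h_{jl}-h_{il}h_{jk}$ of $H$, and a direct check shows that the full system (\ref{Gauss_eq}) is equivalent to the single matrix identity
\begin{equation*}
S=(DQ)\,\mathrm{adj}(H)\,(DQ),
\end{equation*}
where $Q$ is the permutation matrix swapping the indices $1$ and $3$ and $D=\mathrm{diag}(1,-1,1)$. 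This bridge identity is the only nontrivial ingredient of the proof.

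Granted it, the necessity of $T\geq 0$ is immediate from $\det(\mathrm{adj}(H))=(\det H)^2$ together with $\det(DQ)=1$, which give $T=(\det H)^2$. For the converse, assuming $T>0$ and hence $\det H\neq 0$, I would apply $\mathrm{adj}$ to both sides of the bridge identity and use the companion formula $\mathrm{adj}(\mathrm{adj}(H))=(\det H)\,H$ along with $(DQ)^{-1}=DQ$ to solve explicitly:
\begin{equation*}
H=\frac{1}{\varepsilon\sqrt{T}}\,(DQ)\,\mathrm{adj}(S)\,(DQ),\qquad \varepsilon:=\mathrm{sgn}(\det H)=\pm 1.
\end{equation*}
Reading off the nine entries of the right-hand side — each a $2\times 2$ cofactor of $S$, possibly sign-flipped by $D$ and repositioned by $Q$ — reproduces the six displayed formulas in the theorem. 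To close the converse, I would \emph{define} $H$ by these formulas when $T>0$ and verify the bridge identity $(DQ)\,\mathrm{adj}(H)\,(DQ)=S$ by appealing to $\mathrm{adj}(\mathrm{adj}(S))=T\,S$, so that this $H$ indeed solves (\ref{Gauss_eq}).

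The sign ambiguity $\varepsilon=\pm 1$ reflects the invariance of (\ref{Gauss_eq}) under $H\mapsto -H$, geometrically the two choices of unit normal $N$; uniqueness up to this sign is forced by the bijectivity of $\mathrm{adj}$ on invertible matrices. The main obstacle is not conceptual but purely bookkeeping: I have to pin down $Q$ and $D$ so that the conventional ordering of the independent components of $R_{ijkl}$ matches the natural adjugate layout of a symmetric $3\times 3$ matrix. Once the bridge identity is correctly stated, the remainder — both the obstruction $T\geq 0$ and the explicit inversion formula — is elementary linear algebra in dimension three.
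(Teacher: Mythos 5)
Your proposal is correct, and it is worth noting that the paper itself does not prove Theorem~\ref{Thomas} at all: the statement is imported from Weise and Thomas, with the authors merely pointing to the relevant pages of \cite{W}, \cite{T}, \cite{Kawa} and \cite{A1}. So your argument is not a variant of the paper's proof but a self-contained replacement for the citation, and it holds up. I checked the one ingredient you flag as nontrivial: with $Q$ the $1\leftrightarrow 3$ transposition and $D=\mathrm{diag}(1,-1,1)$, the six entries of $(DQ)\,\mathrm{adj}(H)\,(DQ)$ are exactly $h_{22}h_{33}-h_{23}^2$, $h_{11}h_{33}-h_{13}^2$, $h_{11}h_{22}-h_{12}^2$, $h_{11}h_{23}-h_{13}h_{12}$, $h_{12}h_{23}-h_{13}h_{22}$, $h_{12}h_{33}-h_{13}h_{23}$ placed so as to match the matrix $S$ of the statement, so the bridge identity is precisely the system (\ref{Gauss_eq}); moreover $DQ=QD$ is an involution of determinant $1$, so $T=\det\mathrm{adj}(H)=(\det H)^2\geq 0$, and the inversion via $\mathrm{adj}(\mathrm{adj}(H))=(\det H)H$ and $\mathrm{adj}(\mathrm{adj}(S))=TS$ reproduces the displayed cofactor formulas once one uses the curvature symmetry $R_{ijkl}=R_{klij}$ to identify $R_{1312}$ with $R_{1213}$, etc. Two small points you should make explicit in a final write-up: uniqueness up to sign comes from $H=(\det H)^{-1}\mathrm{adj}(\mathrm{adj}(H))$ together with $\det H=\pm\sqrt{T}$, not from injectivity of $\mathrm{adj}$ (which is exactly two-to-one on invertible $3\times 3$ matrices since $\mathrm{adj}(-H)=\mathrm{adj}(H)$); and the reduction of the full tensor equation $h_{ik}h_{jl}-h_{il}h_{jk}=R_{ijkl}$ to the six listed components uses that in dimension three the remaining components are forced by the index symmetries alone. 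Compared with the classical determinantal manipulations in Weise and Thomas, your adjugate formulation is cleaner and makes the necessity of $T\geq0$, the inversion formula, and the sign ambiguity all fall out of the same identity.
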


\noindent
For Theorem~\ref{Thomas}, we refer to \cite[p.530$\sim$531]{W}, \cite[p.192, p.199]{T}, 
\cite[p.42$\sim$43]{Kawa} and \cite[p.132]{A1}.

Concerning the solvability of the Gauss equation in codimension $1$ for general $3$-dimensional 
Riemannian manifolds, Jacobowitz \cite{J1} showed the following result:``Non flat curvature tensors 
$R_{ijkl}$ admit a solution of the Gauss equation in codimension $1$ if and only if $T>0$ or the 
$(3,3)$-curvature matrix 
\begin{align*}
\begin{pmatrix}
R_{1212} & R_{1213} & R_{1223}\\
R_{1213} & R_{1313} & R_{1323}\\
R_{1223} & R_{1323} & R_{2323}
\end{pmatrix}
\end{align*}
has exactly one non-zero eigenvalue".

To only determine the solvability of the Gauss equation, we may apply Jacobowitz' result.
However for our purpose, we must know the explicit form of the solution of the Gauss equation, 
when we consider the derived Gauss equation.
In the following we reformulate Theorem~\ref{Thomas} and the result of Jacobowitz \cite{J1} to 
the form fitted to our purpose.

\subsection{Solvable cases}
In this subsection, we study the Gauss equation in codimension $1$ for three-dimensional 
solvable metric Lie algebras. 
Let $(\g, \naiseki)$ be a three-dimensional non-flat solvable metric Lie algebra.
By Milnor-type theorems (Proposition~\ref{M1}) and Lemma~\ref{R}, we can take an orthonormal 
basis $\{ x_1, x_2, x_3 \}$ of $(\g, k \naiseki)$ for some $k>0$ so that
$R_{1223}=R_{1323}=0$ and $R_{1212} \neq 0$. 
In fact, three-dimensional flat solvable metric Lie algebras are exhausted by $\mathbb{R}^3$ with any 
inner product $\naiseki$ and $(\mathfrak{r}^{\prime}_{3, \alpha}, k \naiseki)$ with $(\alpha, 
\lambda)=(0,1)$, following the notations in Proposition~\ref{M1} (3).
Then by Lemma~\ref{R}, we can easily verify the property $R_{1212} \neq 0$ for the remaining non-flat 
cases, including the Lie algebras $\g = \mathfrak{h}_3$ and $\mathfrak{r}_{3,1}$, where the inner 
products are uniquely determined up to isometry and scaling.

First of all we prove a lemma, which is a refinement of Theorem~\ref{Thomas}, fitted to our setting.

\begin{lem}\label{Lem G}
Let $(\g, \naiseki)$ be a three-dimensional metric Lie algebra.
Suppose that $R_{1223}=R_{1323} = 0$ and $R_{1212} \neq 0$. 
\begin{enumerate}
\item When $R_{2323} \neq 0$, $(\g,\naiseki)$ has a solution of the Gauss equation in codimension $1$ 
if and only if the following inequality holds:
\begin{align*}
S:=\frac{R_{1212} R_{1313} - R_{1213}^2}{R_{2323}}>0.
\end{align*}
Moreover the solution is unique up to sign, and is given by
\begin{align}\label{sol}
h_{11} = \pm \sqrt{S}, \
h_{22} = \frac{R_{1212}}{h_{11}} ,\
h_{33} = \frac{R_{1313}}{h_{11}} ,\
h_{23} = \frac{R_{1213}}{h_{11}} ,\
h_{12} = h_{13} = 0.
\end{align}
\item When $R_{2323} = 0$, 
$(\g, \naiseki)$ has a solution of the Gauss equation in codimension $1$ if and only if the following 
equality holds: 
\begin{align*}
R_{1212}R_{1313}=R_{1213}^2.
\end{align*}  
In this case solutions are continuously deformable.
\end{enumerate}
\end{lem}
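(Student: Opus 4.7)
The approach is to apply Theorem~\ref{Thomas} (the Weise--Thomas inversion) together with Jacobowitz's refinement \cite{J1} that handles the degenerate case $T=0$. Everything hinges on the observation that, under the vanishing hypothesis $R_{1223}=R_{1323}=0$, the discriminant of Theorem~\ref{Thomas} becomes block diagonal:
\begin{equation*}
T \;=\; R_{2323}\bigl(R_{1212}R_{1313}-R_{1213}^{2}\bigr),
\end{equation*}
which in the setting of part (1) reads $T=R_{2323}^{2}\,S$, so $T>0 \Leftrightarrow S>0$. The $(3,3)$ curvature matrix appearing in Jacobowitz's criterion also splits as a $2\times 2$ upper block with entries $R_{1212},R_{1213},R_{1313}$ together with the scalar $R_{2323}$ in the bottom corner.

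For part (1) with $S>0$, I would invoke Theorem~\ref{Thomas} directly. Substituting $R_{1223}=R_{1323}=0$ into the six determinantal formulas, the ones for $h_{12}$ and $h_{13}$ vanish identically, and the remaining four simplify, after dividing by $\sqrt{T}=|R_{2323}|\sqrt{S}$ and absorbing the sign of $R_{2323}$ into $\varepsilon$, to the expressions in (\ref{sol}); uniqueness up to sign is part of Theorem~\ref{Thomas}. For the converse, existence of a solution forces $T\ge 0$, hence $S\ge 0$, and the borderline case $S=0$ (equivalently $T=0$) is excluded by Jacobowitz: since $R_{2323}\ne 0$ already supplies one non-zero eigenvalue of the curvature matrix, ``exactly one non-zero eigenvalue'' would require the upper $2\times 2$ block to have both eigenvalues zero, that is, to be the zero matrix, contradicting $R_{1212}\ne 0$.

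For part (2), $R_{2323}=0$ forces $T=0$, so Theorem~\ref{Thomas} provides no information and one must rely on Jacobowitz's criterion. The curvature matrix is now block diagonal with the $2\times 2$ upper block and a zero in the bottom-right; ``exactly one non-zero eigenvalue'' is equivalent to that $2\times 2$ block having rank $1$, and since $R_{1212}\ne 0$ prevents it from vanishing, the condition reduces precisely to $R_{1212}R_{1313}=R_{1213}^{2}$. To justify the continuous deformability claim, I would exhibit an explicit one-parameter family of solutions by setting $h_{12}=h_{13}=0$ and treating $h_{11}\ne 0$ as a free parameter; the first, second and fourth equations of (\ref{Gauss_eq}) then determine $h_{22}=R_{1212}/h_{11}$, $h_{33}=R_{1313}/h_{11}$, $h_{23}=R_{1213}/h_{11}$, and the identity $R_{1212}R_{1313}=R_{1213}^{2}$ guarantees that the remaining equations are automatically satisfied.

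The delicate step, as anticipated, is the degenerate regime $T=0$, where Theorem~\ref{Thomas} is silent: Jacobowitz's eigenvalue characterization is the essential tool both on the boundary of part (1) and throughout part (2). Once that is in place, the rest is routine bookkeeping with the Weise--Thomas determinant formulas and the symmetry $R_{ijkl}=R_{klij}$.
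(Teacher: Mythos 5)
Your proof is correct, but it follows a genuinely different route from the paper's. The paper proves Lemma~\ref{Lem G} in a completely self-contained, elementary way: it manipulates the six quadratic equations (\ref{Gauss_eq}) directly, first deducing $h_{12}=h_{13}=0$ from the last two equations via the invertibility of $\begin{pmatrix} h_{23} & -h_{22} \\ h_{33} & -h_{23}\end{pmatrix}$ when $R_{2323}\neq 0$, then solving for the remaining $h_{ij}$ in terms of $h_{11}$; in case (2) it splits into $h_{12}\neq 0$ and $h_{12}=0$ and derives $R_{1212}R_{1313}=R_{1213}^2$ by hand, exhibiting a two-parameter family of solutions (free $h_{11}\neq 0$ and $h_{12}$) for the converse. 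You instead derive the lemma as a corollary of Theorem~\ref{Thomas} together with Jacobowitz's eigenvalue criterion, using the block-diagonal factorization $T=R_{2323}\bigl(R_{1212}R_{1313}-R_{1213}^2\bigr)=R_{2323}^2 S$. Your reduction of the Weise--Thomas determinant formulas to (\ref{sol}) checks out (the sign of $R_{2323}$ is correctly absorbed into $\varepsilon$), your eigenvalue counting for the degenerate cases is right (since the curvature matrix is symmetric, rank equals the number of non-zero eigenvalues, so $R_{2323}\neq 0$ plus $R_{1212}\neq 0$ rules out ``exactly one non-zero eigenvalue'' when $S=0$, and in case (2) the condition reduces to the upper block having rank one), and your one-parameter family in part (2) does satisfy all six equations. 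What each approach buys: yours is shorter given the cited results, but it leans on Jacobowitz's criterion, which the paper quotes without proof and explicitly chooses not to rely on --- the authors state they need the explicit form of the solutions for the derived Gauss equation in Section~5, and their direct computation produces those formulas (and the larger solution family in case (2)) as a by-product, keeping the lemma independent of the external eigenvalue characterization.
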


\begin{proof}
Take any three-dimensional metric Lie algebra $(\g, \naiseki)$.
Assume that $R_{1223}=R_{1323} = 0$ and $R_{1212} \neq 0$.
We first prove (1). Assume that $R_{2323} \neq 0$ and $(\g,\naiseki)$ has a solution of 
the Gauss equation in codimension $1$. 
As we mentioned in the previous subsection, the following system of quadratic equations hold:
\begin{eqnarray}
\left\{
\begin{array}{l}\label{GGG}
h_{11}h_{22}-h_{12}^2=R_{1212}\neq 0, \\
h_{11}h_{33}-h_{13}^2=R_{1313}, \\
h_{22}h_{33}-h_{23}^2=R_{2323}\neq 0, \\
h_{11}h_{23}-h_{13}h_{12}=R_{1213}, \\
h_{12}h_{23}-h_{13}h_{22}=R_{1223}=0, \\
h_{12}h_{33}-h_{13}h_{23}=R_{1323}=0.
\end{array}
\right.
\end{eqnarray}
The last two equations can be expressed in the matrix form:
\begin{align*}
\begin{pmatrix} h_{23} & -h_{22} \\ h_{33} & -h_{23} \end{pmatrix}
\begin{pmatrix} h_{12} \\ h_{13} \end{pmatrix} = 
\begin{pmatrix} 0 \\ 0 \end{pmatrix}.
\end{align*}
Since the determinant of this square matrix is $h_{22}h_{33}-h_{23}^2 = R_{2323} \neq 0$, we have 
$h_{12} = h_{13} = 0$.
Then, substituting $h_{12}=0$ into the first equation in (\ref{GGG}), 
we have $h_{11}h_{22} = R_{1212} \neq 0$, from which we have $h_{11} \neq 0$.
This gives that
\begin{align*}
h_{22}=\frac{R_{1212}}{h_{11}}, \
h_{33}=\frac{R_{1313}}{h_{11}}, \
h_{23}=\frac{R_{1213}}{h_{11}}.
\end{align*}
By substituting these equalities into the remaining equation $h_{22}h_{33}-h_{23}^2=R_{2323}$, we obtain
\begin{align*}
S= \frac{R_{1212} R_{1313} - R_{1213}^2}{R_{2323}}=h_{11}^2 >0.
\end{align*}
Therefore $S>0$ is a necessary condition so that $(\g,\naiseki)$ has a solution of the Gauss equation 
in codimension $1$.
From the above calculations, the solution is uniquely determined up to sign, and is explicitly given by 
\begin{align*}
h_{11} = \pm \sqrt{S}, \
h_{22} = \frac{R_{1212}}{h_{11}} , \
h_{33} = \frac{R_{1313}}{h_{11}} , \
h_{23} = \frac{R_{1213}}{h_{11}} , \
h_{12}=h_{13}=0.
\end{align*}

\noindent
Conversely, if $S>0$, then one can easily show that the above $h_{ij}$ gives a solution 
of the Gauss equation in codimension $1$.

Next we show (2).
Assume that $R_{2323}=0$, and $(\g,\naiseki)$ has a solution of the Gauss equation in codimension $1$. 
Then the following system of quadratic equations hold:
\begin{align*}
\left\{
\begin{array}{l}
h_{11}h_{22}-h_{12}^2=R_{1212}\neq 0, \\
h_{11}h_{33}-h_{13}^2=R_{1313}, \\
h_{22}h_{33}-h_{23}^2=R_{2323}=0, \\
h_{11}h_{23}-h_{13}h_{12}=R_{1213}, \\
h_{12}h_{23}-h_{13}h_{22}=R_{1223}=0, \\
h_{12}h_{33}-h_{13}h_{23}=R_{1323}=0. 
\end{array}
\right.
\end{align*}
In case $h_{12} \neq 0$, we have 
\begin{align*}
h_{23}=\frac{h_{13}h_{22}}{h_{12}}, \quad h_{33}=\frac{h_{13} h_{23}}{h_{12}}=\frac{h_{13}^2 h_{22}}{h_{12}^2}
\end{align*}
from the last two equations $h_{12} h_{23}-h_{13}h_{22}=0$ and $h_{12} h_{33}-h_{13}h_{23}=0$.
Hence we obtain
\begin{align*}
R_{1213}=h_{11}h_{23}-h_{13}h_{12}=\frac{h_{11}h_{13}h_{22}-h_{13}h_{12}^2}{h_{12}}=\frac{h_{13}}{h_{12}} R_{1212}.  
\end{align*} 
Similarly, we obtain 
\begin{align*}
R_{1313}=h_{11}h_{33}-h_{13}^2=\frac{h_{13}^2}{h_{12}^2}R_{1212}.
\end{align*}
Hence one see that
\begin{align*}
R_{1212} R_{1313}=\frac{h_{13}^2}{h_{12}^2}R_{1212}^2=R_{1213}^2.  
\end{align*} 

In case $h_{12} = 0$, from the equations $h_{11}h_{22}-h_{12}^2 = R_{1212} \neq 0$ and 
$h_{12}h_{23}-h_{13}h_{22}=R_{1223}=0$, one can see that $h_{11}h_{22} \neq 0$ and $h_{13} =0$.
Then we have 
\begin{align*}
R_{1212} R_{1313} = h_{11}^2 h_{22} h_{33} = h_{11}^2 h_{23}^2=R_{1213}^2
\end{align*}
since $h_{12}=h_{13}=0$ and $h_{22}h_{33}=h_{23}^2$.

Conversely, when $R_{1212} R_{1313} =R_{1213}^2$ holds, we have solutions of the Gauss equation 
by taking arbitrary $h_{11} (\neq 0)$ and $h_{12}$, and putting remaining $h_{ij}$ as follows:
\begin{align*}
h_{22}= \frac{R_{1212}+h_{12}^2}{h_{11}}, \ h_{23}=\frac{R_{1213}}{R_{1212}}h_{22}, 
\ h_{33}=\frac{R_{1213}^2}{R_{1212}^2}h_{22}, \ h_{13}=\frac{R_{1213}}{R_{1212}}h_{12}.
\end{align*}
\end{proof}

We apply this lemma for each three-dimensional non-flat solvable metric Lie algebra.
Note that the solvability of the Gauss equation is unchanged if we replace $\naiseki$ with 
$k \naiseki$ for any $k>0$.

\subsubsection{The cases of $\mathfrak{h}_3$ and $\mathfrak{r}_{3,1}$}
We first consider the case of $\mathfrak{h}_3$.
Let $\{ e_1, e_2, e_3\}$ be the basis of $\mathfrak{h}_3$ so that the nonzero bracket relation is given by
$[e_1, e_2]=e_3 $, and $\naiseki_0$ be the inner product so that the basis $\{ e_1, e_2, e_3\}$ is orthonormal.
Recall that, the inner product on $\mathfrak{h}_3$ is unique up to isometry and scaling. 
\begin{prop}\label{g1}
$(\mathfrak{h}_3, \naiseki_0)$ does not have a solution of the Gauss equation in codimension $1$. 
\end{prop}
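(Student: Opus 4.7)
The plan is to apply Lemma~\ref{Lem G}\,(1) after first computing the curvature tensor of $(\mathfrak{h}_3,\langle\,,\rangle_0)$ via Lemma~\ref{R}. First I would match the Heisenberg bracket $[e_1,e_2]=e_3$ to the normal form of Lemma~\ref{R} by setting $a=0$, $b=1/2$, $c=0$, $d=0$, $t=0$; these satisfy the constraint $(a+d)t=0$, so Lemma~\ref{R} applies. Plugging in, the nonzero components of the curvature turn out to be
\begin{align*}
R_{1212}=-\tfrac{3}{4},\qquad R_{1313}=\tfrac{1}{4},\qquad R_{2323}=\tfrac{1}{4},
\end{align*}
while $R_{1213}=R_{1223}=R_{1323}=0$.

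Next I would verify that the hypotheses of Lemma~\ref{Lem G} are met: $R_{1223}=R_{1323}=0$ and $R_{1212}=-3/4\neq 0$. Since $R_{2323}=1/4\neq 0$, the situation falls into case~(1) of that lemma, so the existence of a solution of the Gauss equation in codimension~$1$ is equivalent to the strict positivity of
\begin{align*}
S \;=\; \frac{R_{1212}R_{1313}-R_{1213}^{\,2}}{R_{2323}}.
\end{align*}
A direct substitution gives $S = \bigl((-3/4)(1/4)-0\bigr)/(1/4) = -3/4 < 0$, so the Gauss equation admits no solution in codimension $1$, as required.

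There is essentially no obstacle beyond bookkeeping: the only care needed is to correctly identify the parameters $(a,b,c,d,t)$ matching the bracket $[e_1,e_2]=e_3$ (in particular noticing the factor $2$ in $2be_3$, which forces $b=1/2$ rather than $b=1$), and then to observe that the sign of the numerator $R_{1212}R_{1313}=-3/16$ is negative while $R_{2323}>0$, which is precisely what prevents $S$ from being positive.
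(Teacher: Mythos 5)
Your proof is correct and is essentially identical to the paper's: both compute the curvature via Lemma~\ref{R} (with the identification $a=c=d=t=0$, $b=1/2$, which the paper leaves implicit), obtain $R_{1212}=-3/4$, $R_{1313}=R_{2323}=1/4$ and vanishing off-diagonal components, and then conclude from Lemma~\ref{Lem G}\,(1) since $S=-3/4<0$. Your explicit remark about the factor $2$ forcing $b=1/2$ is a helpful detail the paper omits.
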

\begin{proof}
By Lemma~\ref{R}, the Riemannian curvature tensors are given by
\begin{align*}
& R_{1212}=-\frac{3}{4}, \quad R_{1313}=R_{2323}=\frac{1}{4}, \\
& R_{1213}=R_{1223}=R_{1323}=0, 
\end{align*} 
which satisfies the assumptions in Lemma~\ref{Lem G} (1).
A direct calculation shows that 
\begin{align*}
\frac{R_{1212} R_{1313} - R_{1213}^2}{R_{2323}}=-\frac{3}{4}<0.
\end{align*}
By Lemma~\ref{Lem G} (1), $(\mathfrak{h}_3, \naiseki_0)$ does not have a solution of the Gauss equation 
in codimension $1$.
\end{proof}

\begin{rem}
This fact has been already proved by Rivertz (\cite{R2}), and Masal'tsev (\cite{Mas1}), 
Borisenko (\cite{Bor}) have improved his result.
Namely, Masal'tsev proved that the $2n+1$-dimensional Heisenberg group equipped 
with any left-invariant Riemannian metric can not be locally isometrically embedded into the 
$2n+2$-dimensional Euclidean space, and Borisenko further improved Masal'tsev's 
result, by showing the non-existence of embeddings into the $4n$-dimensional Euclidean space. 
For details on the previous results of $\mathfrak{h}_3$, see \cite{Mas1}, \cite{Bor}.
Kaneda have deeply studied local isometric embeddings of the $3$-dimensional Heisenberg group 
in the case of codimension $2$ (\cite{K2}).
\end{rem}

Next we consider the Lie algebra $\mathfrak{r}_{3, 1}$.
In this case, inner products on $\mathfrak{r}_{3, 1}$ are also unique up to isometry and scaling. 
We take the inner product $\naiseki_0$ so that the basis $\{ e_1, e_2, e_3\}$ of  $\mathfrak{r}_{3, 1}$, 
whose nonzero bracket relations are given by 
$[e_1, e_2]=e_2, [e_1, e_3 ]=e_3 $, is orthonormal. 
By Lemma~\ref{Lem G} (1) and Lemma~\ref{R}, one can easily prove the following proposition.

\begin{prop}
$(\mathfrak{r}_{3, 1} ,\naiseki_0)$ does not have a solution of the Gauss equation in codimension $1$. 
\end{prop}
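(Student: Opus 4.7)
The plan is to mimic the proof of Proposition~4.3 by applying Lemma~\ref{R} to compute the curvature components and then invoking Lemma~\ref{Lem G}~(1). Concretely, the given bracket relations $[e_1,e_2]=e_2$ and $[e_1,e_3]=e_3$ correspond in the notation of Lemma~\ref{R} to the parameter values $a=d=1$ and $b=c=t=0$ (and the compatibility condition $(a+d)t=0$ is trivially satisfied).

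With these values, Lemma~\ref{R} should yield
\begin{align*}
R_{1212}=-1, \quad R_{1313}=-1, \quad R_{2323}=-1, \quad R_{1213}=R_{1223}=R_{1323}=0.
\end{align*}
This is of course consistent with the fact that $(\mathfrak{r}_{3,1},\langle\,,\,\rangle_0)$ is the metric Lie algebra of the real hyperbolic space $\mathbb{R}\mathrm{H}^3$, which has constant negative sectional curvature. In particular the hypotheses of Lemma~\ref{Lem G}~(1), namely $R_{1223}=R_{1323}=0$ and $R_{1212}\neq 0$ (and here also $R_{2323}\neq 0$), are satisfied.

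It then remains to evaluate the quantity $S$ from Lemma~\ref{Lem G}~(1):
\begin{align*}
S=\frac{R_{1212}R_{1313}-R_{1213}^2}{R_{2323}}=\frac{(-1)(-1)-0}{-1}=-1<0.
\end{align*}
Since $S$ is negative, Lemma~\ref{Lem G}~(1) rules out the existence of any solution of the Gauss equation in codimension $1$, completing the proof. There is no real obstacle here; the computation is the same template as in Proposition~\ref{g1}, and the only step requiring any care is the correct substitution into Lemma~\ref{R}, but the diagonal form of the curvature tensor (reflecting constant curvature) makes the verification immediate.
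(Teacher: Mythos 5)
Your proof is correct and follows exactly the route the paper intends: the paper itself only remarks that the statement follows ``by Lemma~\ref{Lem G}~(1) and Lemma~\ref{R},'' and your substitution $a=d=1$, $b=c=t=0$ into Lemma~\ref{R} gives the constant-curvature values $R_{1212}=R_{1313}=R_{2323}=-1$ with all off-diagonal components zero, whence $S=-1<0$ as you computed. Nothing is missing.
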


\noindent
The Lie group whose metric Lie algebra is $(\mathfrak{r}_{3, 1},\naiseki_0)$ is a space of 
constant negative curvature, and the above result is a well-known fact.
(It is known that it can be locally isometrically embedded into $\mathbb{R}^5$, and this gives 
the least dimensional local isometric embedding.)

\subsubsection{The case of $\g=\mathfrak{r}_3$}
We take any inner product $\naiseki$ on $\mathfrak{r}_{3}$. 
Then, by Proposition~\ref{M1} (1), there exist $\lambda > 0$, $k > 0$, 
and an orthonormal basis $\{x_1, x_2, x_3\}$ with respect to $k \naiseki$ 
such that the non-zero bracket relations are given by 
\begin{align*}
[x_1,x_2] = x_2 + 2\lambda x_3 , \quad [x_1,x_3] = x_3. 
\end{align*}
Moreover, in the following arguments, we may assume $k=1$ since the existence or non-existence of 
local isometric embeddings does not depend on the choice of the scaling.
Write the above $\naiseki$ as $\naiseki_{\lambda}$ in the following.
It is sufficient to investigate the solvability of the Gauss equation of metric Lie algebras 
$(\mathfrak{r}_{3}, \naiseki_{\lambda})$.
\begin{prop}\label{G1}
The metric Lie algebra $(\mathfrak{r}_{3},\naiseki_{\lambda})$ has a solution of the Gauss equation 
in codimension $1$ if and only if  $\frac{1}{\sqrt{3}} < \lambda < 1$.
\end{prop}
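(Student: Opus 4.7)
The plan is to apply Lemma~\ref{Lem G} directly, after first computing the relevant curvature components from Lemma~\ref{R}. Following Proposition~\ref{M1}(1), the bracket relations for $(\mathfrak{r}_3,\langle\cdot,\cdot\rangle_\lambda)$ match the template in Lemma~\ref{R} with the parameter choices $a=1$, $b=\lambda$, $c=0$, $d=1$, $t=0$. Substituting these into the formulas of Lemma~\ref{R} yields
\begin{align*}
R_{1212} &= -(1+3\lambda^2), & R_{1313} &= \lambda^2-1, & R_{2323} &= \lambda^2-1,\\
R_{1213} &= -2\lambda, & R_{1223} &= 0, & R_{1323} &= 0.
\end{align*}
Since $R_{1212}\neq 0$ for every $\lambda>0$ and $R_{1223}=R_{1323}=0$, the hypotheses of Lemma~\ref{Lem G} are satisfied, so I can split the analysis according to whether $R_{2323}$ vanishes.

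First I would dispose of the degenerate case $\lambda=1$, in which $R_{2323}=0$. Lemma~\ref{Lem G}(2) requires $R_{1212}R_{1313}=R_{1213}^2$ for solvability; here the left-hand side is $0$ while $R_{1213}^2=4$, so no solution exists. In particular, $\lambda=1$ is correctly excluded from the claimed interval.

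For $\lambda\neq 1$, Lemma~\ref{Lem G}(1) reduces everything to the sign of
\begin{align*}
S \;=\; \frac{R_{1212}R_{1313}-R_{1213}^2}{R_{2323}} \;=\; \frac{-(1+3\lambda^2)(\lambda^2-1)-4\lambda^2}{\lambda^2-1}.
\end{align*}
The key algebraic step is to simplify the numerator. A direct expansion gives
\begin{align*}
-(1+3\lambda^2)(\lambda^2-1)-4\lambda^2 \;=\; 1-2\lambda^2-3\lambda^4 \;=\; -(3\lambda^2-1)(\lambda^2+1),
\end{align*}
so that
\begin{align*}
S \;=\; \frac{(3\lambda^2-1)(\lambda^2+1)}{1-\lambda^2}.
\end{align*}

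Since $\lambda^2+1>0$, the positivity of $S$ is governed entirely by the sign of $(3\lambda^2-1)/(1-\lambda^2)$. For $\lambda>0$, both factors are positive precisely when $1/\sqrt{3}<\lambda<1$, and no other regime makes both factors share a sign. Combined with the previous paragraph, this shows that a solution of the Gauss equation in codimension $1$ exists if and only if $\lambda\in(1/\sqrt{3},1)$. The main (and only) obstacle is simply executing the factorization cleanly; once the numerator is written as $-(3\lambda^2-1)(\lambda^2+1)$, the interval falls out immediately.
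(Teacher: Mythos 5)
Your proposal is correct and follows essentially the same route as the paper: compute the curvature components from Lemma~\ref{R} with $a=d=1$, $b=\lambda$, $c=0$, $t=0$, handle the degenerate case $\lambda=1$ via Lemma~\ref{Lem G}(2), and for $\lambda\neq 1$ reduce to the sign of $S$ and factor the numerator as $-(3\lambda^2-1)(\lambda^2+1)$. All computations check out, so nothing further is needed.
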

\begin{proof}
By Lemma~\ref{R}, the Riemannian curvature tensors are given by
\begin{align*}
&R_{1212}=-(3 \lambda^2+1), \\
&R_{1313}=R_{2323}=\lambda^2-1, \\
&R_{1213}=-2 \lambda, \\
&R_{1223}=R_{1323}=0. 
\end{align*} 
We see that $R_{2323}=-1+\lambda^2 =0$ if and only if $\lambda =1$ since $\lambda>0$.
In the case $\lambda \neq 1$, 
we can apply Lemma~\ref{Lem G} (1).
A direct calculation shows that
\begin{align*}
S=\frac{R_{1212} R_{1313} - R_{1213}^2}{R_{2323}} 
&=\frac{-(3 \lambda^2+1)(\lambda^2-1)-(-2 \lambda)^2}{\lambda^2-1}\\
&=\frac{-3 \lambda^4-2\lambda^2+1}{\lambda^2-1}\\
&=\frac{-(3\lambda^2-1)(\lambda^2+1)}{\lambda^2-1}.
\end{align*}  
It is easily see that  
$S >0$
if and only if $\frac{1}{\sqrt{3}} < \lambda < 1$. 

In the case $\lambda =1$,
we have $R_{1212}=-4\neq 0, R_{1313}=R_{2323}=R_{1223}=R_{1323}=0$ and $R_{1213}=-2$.
Then one can see that
\begin{align*}
R_{1212}R_{1313}=0 \neq 4 = R_{1213}^2.
\end{align*}
This shows that $(\mathfrak{r}_{3}, \naiseki_1)$ dose not have a solution of the Gauss 
equation in codimension $1$ by Lemma~\ref{Lem G} (2).
\end{proof}

We can treat the remaining cases in a similar way, although the results are a little 
complicated.

\subsubsection{The case of $\g=\mathfrak{r}_{3, \alpha} \ (-1 \leq \alpha <1)$}
We take any inner product $\naiseki$ on $\mathfrak{r}_{3, \alpha}$. 
By Proposition~\ref{M1} (2) there exist
$\lambda \in \mathbb{R}$, $k>0$ and an orthonormal basis $\{x_1, x_2, x_3\}$ with respect to $k \naiseki$ 
such that the non-zero bracket relations are given by 
\begin{align*}
[x_1,x_2]=x_2+2\lambda (\alpha-1)x_3, \quad [x_1,x_3]=\alpha x_3.
\end{align*}
As before, we may set $k=1$.
Let us denote by $\naiseki_{\lambda}$ the above inner product $\naiseki$.

\begin{prop}\label{G2} 
Let $\alpha \in [-1,1)$.
For the metric Lie algebra $(\mathfrak{r}_{3, \alpha}, \naiseki_{\lambda})$, we have the following: 
\begin{enumerate}
\item When $\alpha = 0$, 
$(\mathfrak{r}_{3, 0},\naiseki_{\lambda})$ has a solution of the Gauss equation in codimension $1$ if and only if $\lambda=0$. 
\item When $\alpha \in [-1, 0)$, 
$(\mathfrak{r}_{3, \alpha},\naiseki_{\lambda})$ has a solution of the Gauss equation in codimension $1$ if and only if
\begin{align*}
|\lambda| < \frac{\sqrt{\sqrt{(1+\alpha^2)^2+12 \alpha^2}-(1+\alpha^2)}}{\sqrt{6}(1-\alpha)}.
\end{align*}
\item When $\alpha \in (0, 1)$, 
$(\mathfrak{r}_{3, \alpha},\naiseki_{\lambda})$ has a solution of the Gauss equation in codimension $1$ if and only if
\begin{align}\label{ineq_1}
\frac{\sqrt{\sqrt{(1+\alpha^2)^2+12 \alpha^2}-(1+\alpha^2)}}{\sqrt{6}(1-\alpha)}<|\lambda| < \frac{\sqrt{\alpha}}{1-\alpha}.
\end{align}
\end{enumerate}
\end{prop}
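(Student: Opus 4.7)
The plan is to apply Lemma~\ref{R} with parameters $a=1$, $b=\lambda(\alpha-1)$, $c=0$, $d=\alpha$, $t=0$ to read off the curvature components of $(\mathfrak{r}_{3,\alpha},\naiseki_\lambda)$, and then feed the result into the solvability criterion of Lemma~\ref{Lem G}. The computation yields
\begin{align*}
R_{1212}&=-(1+3\lambda^2(\alpha-1)^2),\\
R_{1313}&=\lambda^2(\alpha-1)^2-\alpha^2,\\
R_{2323}&=\lambda^2(\alpha-1)^2-\alpha,\\
R_{1213}&=-2\lambda\alpha(\alpha-1),\qquad R_{1223}=R_{1323}=0.
\end{align*}
Since $R_{1212}<0$, the hypotheses of Lemma~\ref{Lem G} are met. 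Abbreviating $\beta:=\lambda^2(\alpha-1)^2\ge 0$, a short expansion gives
\[
R_{1212}R_{1313}-R_{1213}^2=-3\beta^2-(1+\alpha^2)\beta+\alpha^2,\qquad R_{2323}=\beta-\alpha,
\]
so that $S=[-3\beta^2-(1+\alpha^2)\beta+\alpha^2]/(\beta-\alpha)$ whenever $R_{2323}\neq 0$.

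The three cases correspond to the different sign behaviours of $R_{2323}$. For (1) $\alpha=0$: if $\lambda=0$ then every curvature except $R_{1212}$ vanishes and $R_{1212}R_{1313}=0=R_{1213}^2$, so Lemma~\ref{Lem G}(2) delivers a solution, while if $\lambda\ne 0$ we have $R_{2323}=\lambda^2>0$ and $S=-(1+3\lambda^2)<0$, ruling out a solution by Lemma~\ref{Lem G}(1). For (2) $\alpha\in[-1,0)$: then $\beta\ge 0$ and $-\alpha>0$, hence $R_{2323}=\beta-\alpha>0$ automatically, and Lemma~\ref{Lem G}(1) gives a solution iff the quadratic inequality $3\beta^2+(1+\alpha^2)\beta-\alpha^2<0$ holds. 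Its positive root is $\beta_+=\bigl[-(1+\alpha^2)+\sqrt{(1+\alpha^2)^2+12\alpha^2}\bigr]/6$; taking square roots and using $(1-\alpha)>0$ produces the stated upper bound on $|\lambda|$.

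Case (3) is the main nuisance because $R_{2323}=\beta-\alpha$ can change sign. When $\beta=\alpha$ (i.e.\ $|\lambda|=\sqrt{\alpha}/(1-\alpha)$) I will appeal to Lemma~\ref{Lem G}(2) and check directly that
\[
R_{1212}R_{1313}-R_{1213}^2\bigl|_{\beta=\alpha}=-\alpha(1+\alpha)^2\ne 0,
\]
so no solution exists on the boundary. When $\beta\ne\alpha$ we apply Lemma~\ref{Lem G}(1) and analyse the sign of numerator and denominator of $S$ separately. The key observation is that $\beta_+<\alpha$ for every $\alpha\in(0,1)$; I would verify this by plugging $\beta=\alpha$ into $3\beta^2+(1+\alpha^2)\beta-\alpha^2$ and obtaining the positive value $\alpha(1+\alpha)^2$, which, together with the positive leading coefficient and the fact that $\beta_+$ is the larger root of the quadratic, forces $\beta_+<\alpha$. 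From there $S>0$ is equivalent to the numerator and denominator being negative simultaneously, which is exactly $\beta_+<\beta<\alpha$; rewriting this band in terms of $|\lambda|$ gives (\ref{ineq_1}).

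The main obstacle I anticipate is exactly the sign analysis in case (3): both factors of $S$ can vanish or change sign, and one has to pin down the position of $\beta_+$ relative to $\alpha$ to identify the correct two-sided band. The remaining work is essentially solving a single quadratic inequality in $\beta$ and translating back to $|\lambda|$ via $\beta=\lambda^2(\alpha-1)^2$.
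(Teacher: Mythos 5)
Your proposal is correct and follows essentially the same route as the paper: compute the curvature components via Lemma~\ref{R} with $a=1$, $b=\lambda(\alpha-1)$, $c=0$, $d=\alpha$, $t=0$, then apply Lemma~\ref{Lem G}, splitting on the sign of $R_{2323}$ and handling the boundary $R_{2323}=0$ with part (2) of that lemma. Your substitution $\beta=\lambda^2(\alpha-1)^2$ and the evaluation of the quadratic at $\beta=\alpha$ to locate $\beta_+$ relative to $\alpha$ is just a cleaner packaging of the paper's inequality $0<\bigl(\sqrt{(1+\alpha^2)^2+12\alpha^2}-(1+\alpha^2)\bigr)/\bigl(6(1-\alpha)^2\bigr)<\alpha/(1-\alpha)^2$.
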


\begin{proof}
For each $\alpha \in [-1, 1)$, we take the inner product $\naiseki_{\lambda}$ on $\mathfrak{r}_{3, \alpha}$. 
By Lemma~\ref{R}, the Riemannian curvature tensors are given by
\begin{align*}
&R_{1212}=-1-3\lambda^2 (\alpha -1)^2 \neq 0, \\
&R_{1313}=-\alpha^2 +\lambda^2 (\alpha-1)^2, \\
&R_{2323}=-\alpha+\lambda^2 (\alpha-1)^2, \\
&R_{1213}=-2\lambda \alpha (\alpha -1), \\
&R_{1223}=R_{1323}=0. 
\end{align*}  

First, we assume $\alpha=0$.
In this case the condition $R_{2323}\neq 0$ is equivalent to $\lambda \neq 0$. 
In case $\lambda \neq 0$, we can apply Lemma~\ref{Lem G} (1). 
A direct calculation show that
\begin{align*}
S = -1-3\lambda^2<0,
\end{align*} 
which concludes that $(\mathfrak{r}_{3, 0}, \naiseki_{\lambda})$ does not admit a solution of 
the Gauss equation in codimension $1$ when $\lambda \neq 0$.
In case $\lambda = 0$, by applying Lemma~\ref{Lem G} (2), we obtain that
\begin{align*}
R_{1212}R_{1313}-R_{1213}^2 = 0,
\end{align*} 
which concludes that $(\mathfrak{r}_{3, 0}, \naiseki_0)$ has solutions of the Gauss equation in codimension $1$.
We complete the proof of the case (1).

Next we consider the case of $-1 \leq \alpha < 0$. 
Then we see that 
$R_{2323}>0$. 
Hence we can apply Lemma~\ref{Lem G} (1), and $S>0$ if and only if $R_{1212} R_{1313} -R_{1213}^2>0$.
By direct calculations, we obtain that 
\begin{align*}
R_{1212} R_{1313} -R_{1213}^2 = -\{ 3(\alpha-1)^4 \lambda^4+(\alpha-1)^2(\alpha^2+1)\lambda^2-\alpha^2 \}
\end{align*}
and it follows from this equation we see that $R_{1212} R_{1313} -R_{1213}^2 >0$ if and only if 
\begin{align*}
|\lambda| < \frac{\sqrt{\sqrt{(1+\alpha^2)^2+12\alpha^2}-(\alpha^2+1)}}{\sqrt{6}(1-\alpha)},
\end{align*} 
which shows (2).

Finally we consider the case of $0<\alpha <1$. When $R_{2323}=0$, we have that $\lambda^2=\frac{\alpha}{(\alpha-1)^2}$ and 
\begin{align*}
R_{1212} R_{1313}-R_{1213}^2=-\alpha (\alpha+1)^2\neq 0.
\end{align*} 
By Lemma~\ref{Lem G} (2), $(\mathfrak{r}_{3, \alpha}, \naiseki_{\lambda})$ dose not admit 
a solution of the Gauss equation in codimension $1$.
Next we consider the case $R_{2323} \neq 0$.
In this case, the inequality $S>0$ holds if and only if 
\begin{align*}
\{ 3(\alpha-1)^4 \lambda^4+(\alpha-1)^2(\alpha^2+1)\lambda^2-\alpha^2 \}
\{ (\alpha-1)^2\lambda^2-\alpha \}<0.
\end{align*}
Since the inequality
\begin{align*}
0<\frac{\sqrt{(1+\alpha^2)^2+12 \alpha^2}-(1+\alpha^2)}{6(1-\alpha)^2}<\frac{\alpha}{(1-\alpha)^2}
\end{align*}
holds for $0<\alpha <1$, the condition $S>0$ is equivalent to 
\begin{align*}
\frac{\sqrt{\sqrt{(1+\alpha^2)^2+12 \alpha^2}-(1+\alpha^2)}}{\sqrt{6}(1-\alpha)}<|\lambda| 
< \frac{\sqrt{\alpha}}{1-\alpha}.
\end{align*}
Therefore, by Lemma ~\ref{Lem G} (1), $(\mathfrak{r}_{3, \alpha}, \naiseki_{\lambda})$ admits 
a solution of the Gauss equation in codimension $1$ if and only if the inequality~(\ref{ineq_1}) holds.
We complete the proof.
\end{proof}

In Figure 1 we draw the range of $(\alpha, \lambda)$ where the metric Lie algebra 
$(\mathfrak{r}_{3, \alpha},\naiseki_{\lambda})$ admits a solution of the Gauss equation in codimension $1$.

\begin{center}
\includegraphics*[bb=235 60 510 330,clip,width=7cm]{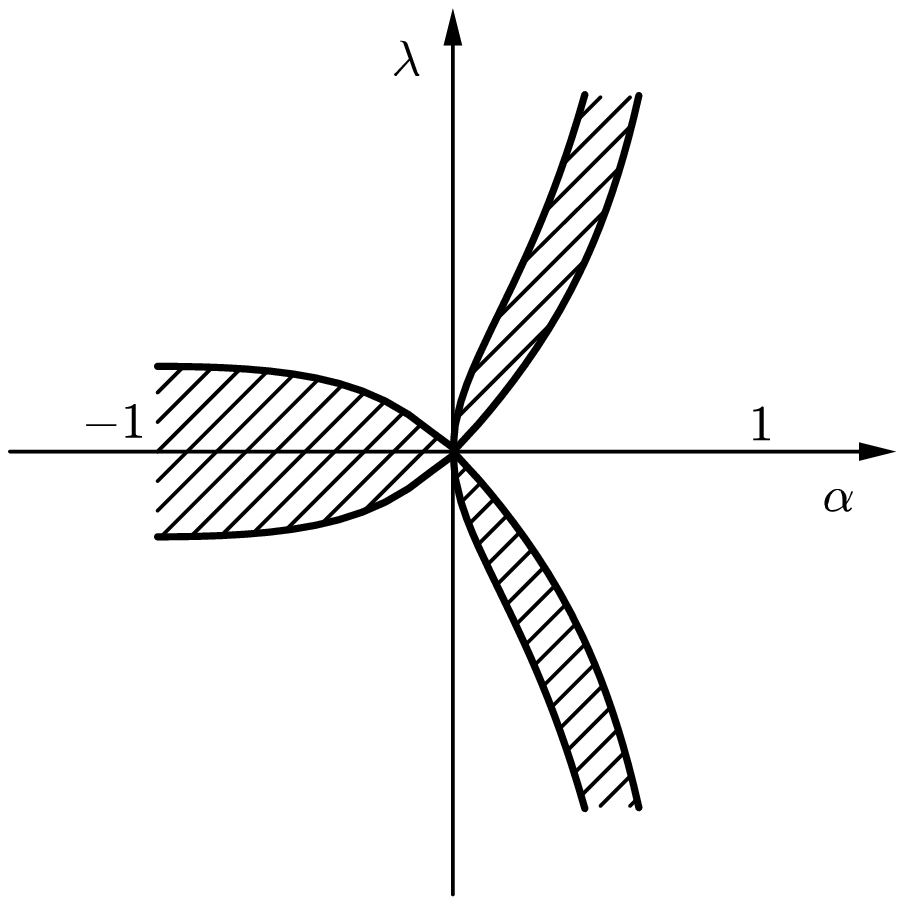}

Figure 1
\end{center}

\subsubsection{The case of $\g=\mathfrak{r}^{\prime}_{3, \alpha} \ (\alpha \geq 0)$}
We take any inner product $\naiseki$ on $\mathfrak{r}^{\prime}_{3, \alpha}$. 
By Proposition~\ref{M1} (3),
there exist
$\lambda \geq 1$, $k>0$ and an orthonormal basis $\{x_1, x_2, x_3\}$ with respect to $k \naiseki$ 
such that the non-zero bracket relations are given by 
\begin{align*}
[x_1, x_2]=\alpha x_2 - \lambda x_3, \quad [x_1, x_3]=\frac{1}{\lambda} x_2 + \alpha x_3.
\end{align*} 
We set $k=1$, and 
let us denote by $\naiseki_{\lambda}$ the above inner product $\naiseki$.

\begin{prop}\label{G3}
Let $\alpha \geq 0$ and $\lambda \geq 1$.
For the metric Lie algebra $(\mathfrak{r}^{\prime}_{3, \alpha}, \naiseki_{\lambda})$, we have the following:
\begin{enumerate}
\item When $\alpha=0$, 
$(\mathfrak{r}^{\prime}_{3, 0},\naiseki_{\lambda})$ has a solution of the Gauss equation 
in codimension $1$ if and only if $\lambda=1$.
\item When $\alpha >0$, 
$(\mathfrak{r}^{\prime}_{3, \alpha},\naiseki_{\lambda})$ has a solution of the Gauss equation 
in codimension $1$ if and only if 
\begin{align*}
\sqrt{\frac{1+2\alpha^2 +2 \sqrt{1+\alpha^2+\alpha^4}}{3}} < \lambda <  \alpha +\sqrt{1+\alpha^2}.
\end{align*}
\end{enumerate}
\end{prop}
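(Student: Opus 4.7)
The plan is to follow the template of Propositions~\ref{G1} and~\ref{G2}: substitute the Milnor-type parametrization of Proposition~\ref{M1}~(3) into Lemma~\ref{R} (giving $a=d=\alpha$, $b=-\lambda/2$, $c=1/(2\lambda)$, $t=0$), then apply Lemma~\ref{Lem G} to decide when the Gauss equation in codimension one is solvable. One immediately reads off $R_{1223}=R_{1323}=0$, together with
\begin{align*}
R_{1212} &= -\bigl(\alpha^2 + \tfrac{3\lambda^2}{4} - \tfrac{1}{4\lambda^2} - \tfrac{1}{2}\bigr), &
R_{1313} &= -\bigl(\alpha^2 - \tfrac{\lambda^2}{4} + \tfrac{3}{4\lambda^2} - \tfrac{1}{2}\bigr), \\
R_{2323} &= \tfrac{\lambda^2}{4} + \tfrac{1}{4\lambda^2} - \alpha^2 - \tfrac{1}{2}, &
R_{1213} &= \alpha(\lambda^2-1)/\lambda.
\end{align*}
A short calculation shows $R_{1212}\neq 0$ on the region $\alpha\geq 0$, $\lambda\geq 1$ except at the isolated flat point $(\alpha,\lambda)=(0,1)$, so Lemma~\ref{Lem G} applies throughout and the proof reduces to sign-analysis of $R_{2323}$ and of $R_{1212}R_{1313}-R_{1213}^2$.

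Writing $u=\lambda^2$, the denominator $R_{2323}$ vanishes on $u\geq 1$ only at $u_U := 1 + 2\alpha^2 + 2\alpha\sqrt{1+\alpha^2}$, whose square root is exactly $\alpha+\sqrt{1+\alpha^2}$, matching the upper bound in (2). The key algebraic step is the palindromic factorization
\[
16u^2\,\bigl(R_{1212}R_{1313} - R_{1213}^2\bigr) = -\bigl(3u^2 - (2+4\alpha^2)u - 1\bigr)\bigl(u^2 + (2+4\alpha^2)u - 3\bigr),
\]
verifiable by direct expansion, whose underlying symmetry $u\leftrightarrow 1/u$ relates the two quadratic factors by reciprocation. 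The second factor equals $4\alpha^2$ at $u=1$ and is monotone increasing, so it is strictly positive on $u\geq 1$ when $\alpha>0$; hence the numerator of $S$ changes sign on $u\geq 1$ exactly at the positive root of the first factor, namely $u_L := (1+2\alpha^2 + 2\sqrt{1+\alpha^2+\alpha^4})/3$, the square of the lower bound in (2). A brief squaring argument confirms $u_L<u_U$ for all $\alpha>0$.

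From here the conclusion is routine: $R_{1212}R_{1313}-R_{1213}^2$ is positive on $[1,\sqrt{u_L})$ and negative beyond, while $R_{2323}$ is negative on $[1,\sqrt{u_U})$ and positive beyond; combined with $u_L<u_U$, the condition $S>0$ holds precisely on $\sqrt{u_L}<\lambda<\sqrt{u_U}$, giving case (2). The boundary $\lambda=\sqrt{u_U}$ is excluded by Lemma~\ref{Lem G}~(2) since the numerator is strictly negative there. Case (1) is the specialization $\alpha=0$: then $R_{1213}\equiv 0$ and $S = -(3\lambda^2+1)(\lambda^2+3)/(4\lambda^2)<0$ for every $\lambda>1$, while at $\lambda=1$ the metric Lie algebra is flat, so $h_{ij}\equiv 0$ is a trivial solution. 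The main obstacle I anticipate is recognising the palindromic factorization: without it the closed form of the lower bound $\sqrt{u_L}$ is hard to motivate and the quartic in $u$ looks intractable, but once the factorization is spotted the remaining sign analysis and the comparison $u_L<u_U$ are straightforward.
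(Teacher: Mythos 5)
Your proposal is correct and follows essentially the same route as the paper: the same curvature formulas from Lemma~\ref{R}, the same factorization of $R_{1212}R_{1313}-R_{1213}^2$ (written in the variable $u=\lambda^2$), the same roots $u_L$ and $u_U$, and the same sign analysis via Lemma~\ref{Lem G}, including the exclusion of the boundary $R_{2323}=0$ by part (2) of that lemma. The only cosmetic differences are your use of monotonicity of the second quadratic factor in place of the paper's explicit chain of inequalities~(\ref{ineq_2}); both establish the same ordering of the critical values.
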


\begin{proof}
For each $\alpha \geq 0$ and $\lambda \geq 1$, 
we take the inner product $\naiseki_{\lambda}$ on $\mathfrak{r}^{\prime}_{3, \alpha}$.
By Lemma~\ref{R}, the Riemannian curvature tensors are given by
 \begin{align*}
& R_{1212}=-\alpha^2-\frac{1}{4} \left( \lambda -\frac{1}{\lambda} \right) \left( 3\lambda+\frac{1}{\lambda} \right), \\
& R_{1313}=-\alpha^2+\frac{1}{4} \left( \lambda -\frac{1}{\lambda} \right) \left( \lambda +\frac{3}{\lambda} \right), \\
& R_{2323}=-\alpha^2+\frac{1}{4} \left( \lambda -\frac{1}{\lambda} \right)^2, \\
& R_{1213}=\alpha \left( \lambda -\frac{1}{\lambda } \right), \\
& R_{1223}=R_{1323}=0. 
 \end{align*}
In the case of $\alpha =0$, the metric Lie algebra $(\mathfrak{r}^{\prime}_{3, 0},\naiseki_{\lambda})$ 
is flat if and only if $\lambda=1$, and in this case the Gauss equation clearly has a solution in codimension $1$.
Assume $\alpha=0$ and $\lambda > 1$.
Then we have $R_{1212} \neq 0$, $R_{2323} > 0$ and 
\begin{align*}
R_{1212}R_{1313}-R_{1213}^2 = -\frac{1}{16} \left( \lambda-\frac{1}{\lambda} \right)^2 
\left( 3 \lambda +\frac{1}{\lambda} \right) \left( \lambda +\frac{3}{\lambda} \right) <0.
\end{align*}
Hence we have $S<0$, and by Lemma~\ref{Lem G} (1), the metric Lie algebra $(\mathfrak{r}^{\prime}_{3, 0}, 
\naiseki_{\lambda})$ 
does not admit a solution of the Gauss equation in codimension $1$ .
We conclude the case (1).

Next we consider the case $\alpha >0$. 
By direct calculation, we obtain that 
\begin{align*}
& R_{1212}R_{1313}-R_{1213}^2 \\
& \qquad =-\frac{1}{16 \lambda^4} \left\{ 3 \lambda^4 -2 (1+2\alpha^2)\lambda^2-1 \right\} 
\left\{ \lambda^4+2 (1+2\alpha^2) \lambda^2-3  \right\}.
\end{align*}
Then, considering the equality $R_{1212}R_{1313}-R_{1213}^2=0$ as a quadratic equation on $\lambda^2$, 
we have 
\begin{align*}
\lambda^2=\frac{1}{3} \left( 1+2\alpha^2\pm 2 \sqrt{1+\alpha^2+\alpha^4} \right), \quad 
-(1+2\alpha^2)\pm 2 \sqrt{1+\alpha^2+\alpha^4}.
\end{align*}
Similarly, from the equation $R_{2323}=\{ \lambda^4-2(1+2\alpha^2)\lambda^2+1 \}/(4\lambda^2)=0$, we have 
$$
\lambda^2= 1+2\alpha^2 \pm 2 \alpha \sqrt{1+\alpha^2} \: \left( = (\alpha \pm \sqrt{1+\alpha^2}\,)^2 \right).
$$
Using the property $\alpha>0$, we can easily show the following inequalities: 
\begin{equation}
\begin{split}\label{ineq_2}
& -(1+2 \alpha^2)-2 \sqrt{1+\alpha^2+\alpha^4} 
< \frac{1}{3} \left( 1+2 \alpha^2-2 \sqrt{1+\alpha^2+\alpha^4} \right) < 0 \\
& \quad < 1+2 \alpha^2-2 \alpha \sqrt{1+\alpha^2} 
< -(1+2 \alpha^2)+2 \sqrt{1+\alpha^2+\alpha^4} < 1 \\
& \quad \quad < \frac{1}{3} \left( 1+2 \alpha^2+2 \sqrt{1+\alpha^2+\alpha^4} \right) 
< 1+2\alpha^2+2\alpha \sqrt{1+\alpha^2}.
\end{split}
\end{equation}
Therefore in case $R_{2323} \neq 0$, combining the condition $\lambda^2 \geq 1$, it follows that 
the inequality $S>0$ holds if and only if 
$$
\frac{1}{3} \left( 1+2 \alpha^2+2 \sqrt{1+\alpha^2+\alpha^4} \right) < \lambda^2  
< 1+2\alpha^2+2\alpha \sqrt{1+\alpha^2}.
$$
(In this case, we have $R_{1212}R_{1313}-R_{1213}^2<0$ and $R_{2323}<0$.)
Since $\alpha>0$ and $\lambda \geq 1$, the above condition is equivalent to
\begin{align*}
\sqrt{\frac{1+2\alpha^2 +2 \sqrt{1+\alpha^2+\alpha^4}}{3}} < \lambda < \alpha +\sqrt{1+\alpha^2}. 
\end{align*}

In case $R_{2323}=0$, we have $\lambda^2= 1+2\alpha^2 +2 \alpha \sqrt{1+\alpha^2}$ since 
$\lambda^2 \geq 1$.
In this case $R_{1212}R_{1313}-R_{1213}^2 \neq 0$ from the above inequalities~(\ref{ineq_2}). 
Hence by Lemma~\ref{Lem G} (2), the metric Lie algebra $(\mathfrak{r}^{\prime}_{3, \alpha}, 
\naiseki_{\lambda})$ does not admit a solution of the Gauss equation in codimension $1$.
\end{proof}

In Figure 2 we draw the range of $(\alpha, \lambda)$ where the metric Lie algebra 
$(\mathfrak{r}^{\prime}_{3, \alpha}, \naiseki_{\lambda})$ admits a solution of the 
Gauss equation in codimension $1$.

\begin{center}
\includegraphics*[bb=190 130 460 345,clip,width=7cm]{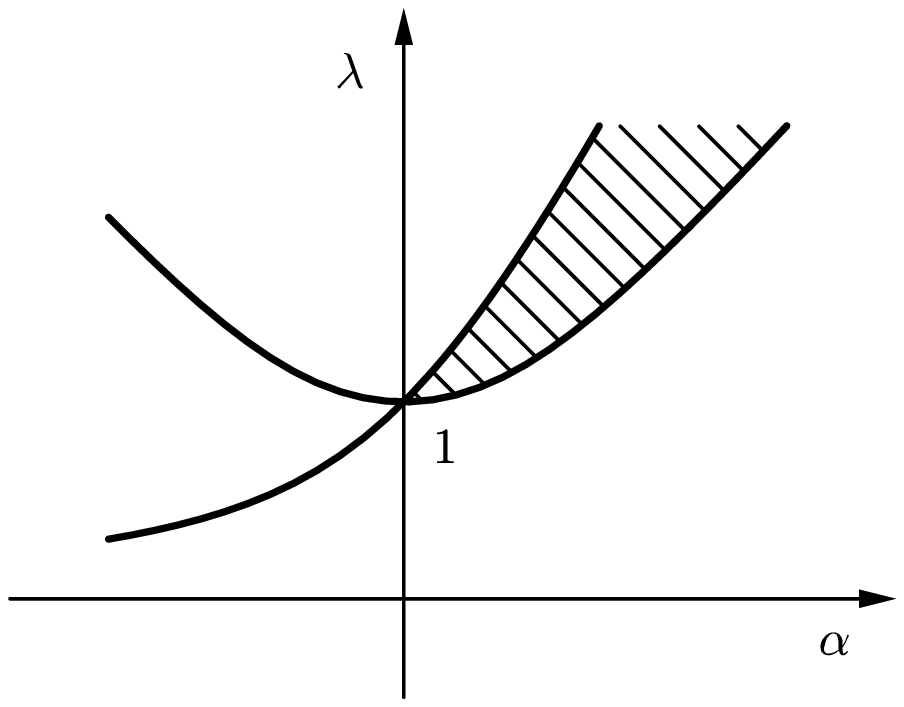}

Figure 2 \;\;\;\;
\end{center}

\subsection{Simple cases}
In this subsection, we study solutions of the Gauss equations of three-dimensional simple metric Lie algebras in codimension $1$. 

From Proposition~\ref{Ms}, for each three-dimensional simple metric Lie algebra $(\g, \naiseki)$, 
there exist $\lambda_2 >0$, $\lambda_3 \neq 0$, $k>0$ and an orthonormal basis $\{ x_1, x_2, x_3 \}$ 
of $\g$ with respect to $k \naiseki$ such that non-zero bracket relations are given by 
\begin{align*}
[x_1, x_2]= \lambda_3 x_3, \ [x_3, x_1]=\lambda_2 x_2, \ [x_2, x_3]=x_1.
\end{align*} 
We can assume that $k=1$ as before.
We here put 
$$
\lambda_2:=\frac{u+v}{2}, \qquad
\lambda_3:=\frac{-u+v}{2}.
$$ 
Then we have 
\begin{align*}
[x_1, x_2]= \frac{-u+v}{2} x_3, \ [x_3, x_1]=\frac{u+v}{2} x_2, \ [x_2, x_3]=x_1,
\end{align*}
and since $\lambda_2>0$ and $\lambda_3 \neq 0$, we have $u \neq \pm v$.
Let us denote by $\naiseki_{(u,v)}$ the inner product $\naiseki$.

\begin{prop}\label{G_simple}
Let $(\g, \naiseki_{(u,v)})$ be the three-dimensional simple metric Lie algebra. 
Then, $(\g, \naiseki_{(u,v)})$ admits a solution of the Gauss equation in codimension $1$ if and only if
\begin{enumerate}
\item \rule{0cm}{0.5cm} When $u=0$, 

\quad \rule{0cm}{0.5cm} $\displaystyle 2(v-1)>1$.
\item \rule{0cm}{0.8cm} When $0 < |u| \leq 1$, 

\quad $1-u^2< 2(v-1) < \displaystyle{\left| \frac{1-u^2}{u} \right|}$ \;\; or \;\;
\rule{0cm}{0.8cm} $2(v-1)< \displaystyle{- \left| \frac{1-u^2}{u} \right|}$.
\item \rule{0cm}{0.5cm} When $|u| > 1$, 

\quad $|2(v-1)| < \displaystyle{\left| \frac{1-u^2}{u} \right|}$ \;\; or \;\;
\rule{0cm}{0.8cm} $2(v-1)< 1-u^2$.
\end{enumerate}
\end{prop}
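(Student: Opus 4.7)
The plan is to compute the Riemannian curvatures via Lemma~\ref{R}, apply Lemma~\ref{Lem G} to reduce the Gauss equation to a single polynomial sign condition, and finally translate that condition into the three stated sub-cases.

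First, I apply Lemma~\ref{R} with $a = d = 0$, $b = (v-u)/4$, $c = -(u+v)/4$, and $t = 1/2$. The required identity $(a+d)t = 0$ is trivially satisfied, and direct substitution yields
\begin{align*}
4R_{1212} &= (1-u^2) + 2u(v-1),\\
4R_{1313} &= (1-u^2) - 2u(v-1),\\
4R_{2323} &= -(1-u^2) + 2(v-1),
\end{align*}
together with $R_{1213} = R_{1223} = R_{1323} = 0$.

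Next, since all off-diagonal curvatures vanish, Lemma~\ref{Lem G}(1) (applied, after an obvious permutation of the indices if $R_{1212}$ happens to be zero) shows that, whenever all three sectional curvatures $R_{1212}, R_{1313}, R_{2323}$ are nonzero, the Gauss equation in codimension one has a solution if and only if $R_{1212}R_{1313}R_{2323} > 0$. A short algebraic check using the standing constraint $u \neq \pm v$ rules out the simultaneous vanishing of any two of these sectional curvatures, and Lemma~\ref{Lem G}(2) then handles the remaining loci where exactly one of them vanishes, showing no solution exists there. Thus the Gauss equation admits a solution precisely when $R_{1212}R_{1313}R_{2323} > 0$.

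Setting $\alpha := 1 - u^2$ and $\beta := 2(v-1)$, this inequality reads
\[
(\alpha^2 - u^2\beta^2)(\beta - \alpha) > 0.
\]
I then perform a case analysis on the sign of $\alpha$. When $u = 0$, so $\alpha = 1$, the inequality reduces to $\beta > 1$, giving case (1). When $0 < |u| \leq 1$, so $\alpha \geq 0$, examining the two sign patterns of $\alpha^2 - u^2\beta^2$ together with $\beta - \alpha$ and using the ordering $\alpha \leq \alpha/|u|$ produces the two subregions $\alpha < \beta < \alpha/|u|$ and $\beta < -\alpha/|u|$, which is case (2). When $|u| > 1$, so $\alpha < 0$, the reversed ordering $-|\alpha|/|u| > -|\alpha|$ makes one of the inequalities in each subcase automatic, leaving exactly $|\beta| < |\alpha|/|u|$ or $\beta < \alpha$; rewriting in terms of $|(1-u^2)/u|$ yields case (3).

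The main obstacle is simply careful sign bookkeeping in the final case analysis, particularly at the transition $|u| = 1$, where $\alpha$ vanishes and one of the two subregions collapses to the empty set. Everything else is routine substitution into results already established.
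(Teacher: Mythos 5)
Your proof is correct and follows the same overall strategy as the paper's: compute the curvature components via Lemma~\ref{R}, reduce solvability of the Gauss equation to the single inequality $R_{1212}R_{1313}R_{2323}>0$, and then carry out the case analysis in $u$ and $2(v-1)$; your curvature values and the factorization $(\alpha^2-u^2\beta^2)(\beta-\alpha)>0$ with $\alpha=1-u^2$, $\beta=2(v-1)$ agree exactly with what the paper obtains, and your three regional descriptions are right. The one place where you diverge is in how the reduction to the product inequality is justified: the paper manipulates the six quadratic Gauss equations directly (first showing $h_{11}\neq 0$, then deriving the identity $R_{1212}R_{1313}=h_{11}^2R_{2323}$, hence $R_{2323}\neq 0$ and $R_{1212}R_{1313}R_{2323}=h_{11}^2R_{2323}^2>0$) and invokes Theorem~\ref{Thomas} for the converse, whereas you reuse Lemma~\ref{Lem G} together with a permutation of the orthonormal basis to cover the locus where $R_{1212}=0$. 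Your permutation argument is legitimate here --- since all off-diagonal components $R_{1213},R_{1223},R_{1323}$ vanish, permuting $\{x_1,x_2,x_3\}$ merely permutes the three diagonal entries of the curvature matrix, and solvability of the Gauss equation is basis-independent --- and both routes rely on the same preliminary observation, which you make, that no two of the three sectional curvatures can vanish simultaneously because $u\neq\pm v$. The paper's direct manipulation simply avoids any relabelling; the content is the same.
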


\begin{proof}
By Lemma~\ref{R}, the Riemannian curvature tensors are given by the following: 
\begin{align*}
& R_{1212} = \frac{1}{4}\{ 1-u^2+2(v-1)u \}, \\
& R_{1313} = \frac{1}{4}\{ 1-u^2-2(v-1)u \}, \\ 
& R_{2323} = -\frac{1}{4}\{ 1-u^2-2(v-1) \}, \\
& R_{1213} = R_{1223} = R_{1323} = 0.
\end{align*}
Then we can easily see that the following four conditions are equivalent:

(i) \rule{0cm}{0.5cm} $R_{1212} = R_{1313} = 0$,

(ii) \rule{0cm}{0.5cm} $R_{1212} = R_{2323} = 0$,

(iii) \rule{0cm}{0.5cm} $R_{1313} = R_{2323} = 0$,

(iv) \rule[-0.3cm]{0cm}{0.8cm} $u^2=1$ and $v=1$.

\noindent
Since $u \neq \pm v$, it follows that none of the above four conditions actually occurs.

Now assume that $(\g, \naiseki_{(u,v)})$ admits a solutions of the Gauss equation in codimension $1$.
It is expressed in the following form:  
\begin{numcases}{}
\label{n1} h_{11}h_{22}-h_{12}^2=R_{1212}, \\
\label{n2} h_{11}h_{33}-h_{13}^2=R_{1313}, \\
\label{n3} h_{22}h_{33}-h_{23}^2=R_{2323}, \\
\label{n4} h_{11}h_{23}-h_{13}h_{12}=0, \\
\label{n5} h_{12}h_{23}-h_{13}h_{22}=0, \\
\label{n6} h_{12}h_{33}-h_{13}h_{23}=0.
\end{numcases}
We first show that $h_{11} \neq 0$.
Assume $h_{11}=0$.
If $h_{12} \neq 0$, then from (\ref{n4}) we have $h_{13}=0$.
Then from (\ref{n5}) and (\ref{n6}), we have $h_{23}=h_{33}=0$, and hence we obtain 
$R_{1313} = R_{2323} = 0$ from (\ref{n2}) and (\ref{n3}).
This is a contradiction.
Hence we have $h_{12}=0$.
In the same way we can show the equality $h_{13} = 0$.
Then since $h_{11}=h_{12}=h_{13}=0$, we have $R_{1212}=R_{1313}=0$ from (\ref{n1}) and (\ref{n2}).
This is a contradiction.
Therefore we have $h_{11} \neq 0$.

Next, from the above equations (\ref{n1})$\sim$(\ref{n6}), we have 
\begin{align*}
R_{1212}R_{1313}&=h_{11}^2h_{22}h_{33}-h_{11}h_{33}h_{12}^2-h_{11}h_{22}h_{13}^2+h_{12}^2h_{13}^2\\
&=h_{11}^2h_{22}h_{33}-h_{11}h_{12}h_{13}h_{23}-h_{11}h_{12}h_{13}h_{23}+h_{11}^2 h_{23}^2\\
&=h_{11}^2h_{22}h_{33}-2h_{11}h_{23}h_{11}h_{23}+h_{11}^2h_{23}^2 \\
&=h_{11}^2h_{22}h_{33}-h_{11}^2h_{23}^2\\
&=h_{11}^2R_{2323}.
\end{align*}
If $R_{2323}=0$, then we have $R_{1212}R_{1313}=0$.
Then it follows that $R_{1212} = R_{2323}=0$ or $R_{1313}=R_{2323}=0$, which contradicts to the 
assumption $u \neq \pm v$.
Hence we have $R_{2323}\neq0$.
In particular we have 
$R_{1212}R_{1313}R_{2323} = h_{11}^2R_{2323}^2 > 0$.
Conversely if $R_{1212}R_{1313}R_{2323} > 0$, then we have 
\begin{align*}
T=\begin{vmatrix}
R_{1212} & R_{1213} & R_{1223}\\
R_{1213} & R_{1313} & R_{1323}\\
R_{1223} & R_{1323} & R_{2323}
\end{vmatrix} = R_{1212}R_{1313}R_{2323} > 0.
\end{align*}
From the converse part of Theorem ~\ref{Thomas}, it follows that the Gauss equation has a 
solution in codimension $1$.

Our remaining problem is to solve the inequality $R_{1212}R_{1313}R_{2323} > 0$.
When $u=0$, it is easy to check that $R_{1212}R_{1313}R_{2323}>0$ if and only if 
$2(v-1)>1$.
When $u \neq 0$, the inequality $R_{1212}R_{1313}R_{2323}>0$ holds if and only if 
\begin{align*}
\left(2(v-1)+\frac{1-u^2}{u} \right)
\left(2(v-1)-\frac{1-u^2}{u} \right)
\left(2(v-1)-(1-u^2) \right) <0.
\end{align*}
Then we can show (2) and (3) by elementary arguments.
\end{proof}

We put $w=2(v-1)$, and in Figure 3 we draw the range of $(u, w)$ where the simple metric Lie algebra 
$(\g, \naiseki_{(u,v)})$ admits a solution of the Gauss equation in codimension $1$.

\begin{center}
\includegraphics*[bb=140 50 355 260,clip,width=7cm]{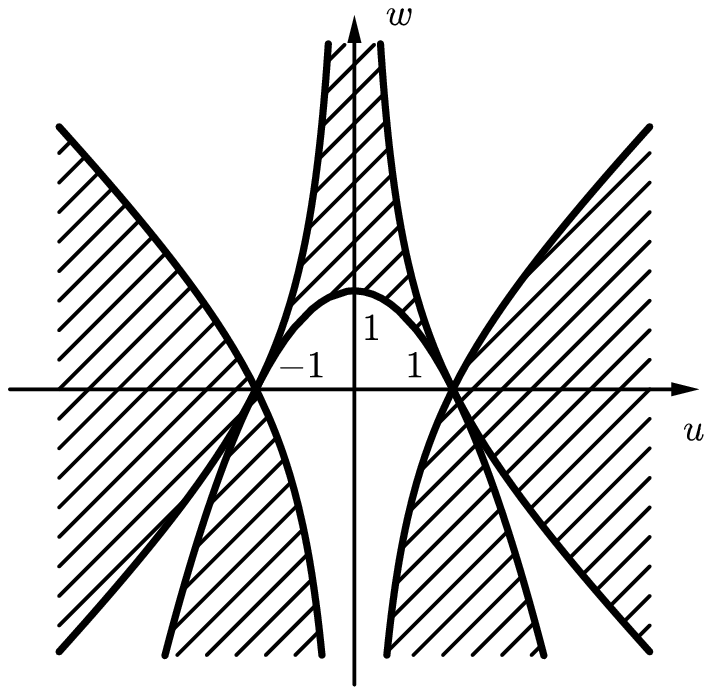}

Figure 3
\end{center}

\noindent
Note that the condition $u = \pm v$ is equivalent to $w=\pm 2u-2$, which are the tangent lines 
of the curves $w = \mp (1-u^2)/u$ at $(\pm 1,0)$, and we must exclude these two lines from 
the region in Figure 3, since the corresponding Lie algebras are not simple.
For example, the point $(u,w)=(1,-4)$, corresponding to the case $(u,v) = (1,-1)$, lies on the line $w=-2u-2$, 
and it is easy to see that it is isomorphic to the solvable metric Lie algebra $(\mathfrak{r}_{3, -1}, 
\naiseki_0)$ up to isometry and scaling.
Hence we must exclude it in our argument, though this metric Lie algebra actually admits a solution 
of the Gauss equation in codimension $1$, as we showed in Proposition ~\ref{G2} (2).

\section{The derived Gauss equation}
As stated in Introduction, it is well-known that the following metric Lie algebras $(\mathbb{R}^3, \naiseki)$, 
$(\mathfrak{r}_{3,0}, \naiseki_0)$, $(\mathfrak{r}^{\prime}_{3, 0}, \naiseki_1)$, and $(\mathfrak{so}(3), 
\naiseki_{(0,2)})$ can be locally isometrically embedded into $\mathbb{R}^4$, where notations are the same 
as in the previous sections.
In this section, we study solutions of the derived Gauss equation, 
and show that the remaining cases can not be locally isometrically embedded into $\mathbb{R}^4$.

Let $(\g, \naiseki)$ be a three-dimensional metric Lie algebra, 
and let $\{ x_1, x_2,x_3 \}$ be an orthonormal basis of $(\g, \naiseki)$.
We assume that there exists a local isometric embedding $F$ of
$(\g, \naiseki)$ into $\mathbb{R}^4$.
Then by Theorem~\ref{TK}, $F$ satisfies the derived Gauss equation: 
\begin{equation} 
\begin{split} \label{1}
& (\nabla R)(x_p,x_i,x_j,x_k,x_l) \\ 
& \qquad = \langle \nabla^3_{(x_p,x_i,x_k)} F, \nabla^2_{(x_j,x_l)} F \rangle_{\mathbb{R}^4} 
+ \langle \nabla^2_{(x_i,x_k)} F, \nabla^3_{(x_p,x_j,x_l)} F \rangle_{\mathbb{R}^4} \\
& \qquad \qquad - \langle \nabla^3_{(x_p,x_i,x_l)} F, \nabla^2_{(x_j,x_k)} F \rangle_{\mathbb{R}^4}
- \langle \nabla^2_{(x_i,x_l)} F, \nabla^3_{(x_p,x_j,x_k)} F \rangle_{\mathbb{R}^4}
\end{split}
\end{equation}
for each $i,j,k,l,p \in \{ 1,2,3 \}$.
We put $h_{ij}:=\langle (\nabla^2_{(x_i,x_j)}F)_e, N \rangle$ as before, where $e$ is the unit element 
of the Lie group $G$ and $N$ is a unit normal vector of the embedding $F$ at $e$.
Remind that $h_{ij}=h_{ji}$ and it satisfies the Gauss equation.
We here put $h_{ijk}:=\langle (\nabla^3_{(x_i,x_j,x_k)}F)_e, N \rangle$.
Then, rewriting the equation (\ref{1}), 
we have
\begin{align} \label{9}
& \nabla_{p} R_{ijkl} = h_{pik} h_{jl} 
+  h_{ik} h_{pjl} - h_{pil} h_{jk} - h_{il} h_{pjk} &
\end{align}
for each $i,j,k,l,p \in \{ 1,2,3 \}$, because the tangential part of $(\nabla^2_{(x_i,x_j)}F)_e$ is $0$.

From the integrability conditions 
\begin{align*}
\nabla_{(x_i, x_j, x_k)}^3 F & = \nabla_{(x_i, x_k, x_j)}^3 F, \\
\nabla_{(x_i, x_j, x_k)}^3 F & = \nabla_{(x_j, x_i, x_k)}^3 F - \nabla_{R(x_i, x_j)x_k}^1 F, 
\end{align*}
we have
$$
h_{ijk} = h_{ikj}
$$
and 
\begin{align*}
h_{ijk} &= \langle (\nabla_{(x_i, x_j, x_k)}^3 F)_e, N \rangle \\
& = \langle (\nabla_{(x_j, x_i, x_k)}^3 F)_e - (\nabla_{R(x_i, x_j)x_k}^1 F)_e, N \rangle \\
& = \langle (\nabla_{(x_j, x_i, x_k)}^3 F)_e , N \rangle \\
& = h_{jik}.
\end{align*}
Hence $h_{ijk}$ is a symmetric $3$-tensor.

In the following we consider (\ref{9}) as abstract linear equations on $h_{ijk}$ 
for a given derived curvature tensor $\nabla R$, and a given solution $h_{ij}$ of the Gauss 
equation, not assuming the existence of local isometric embeddings $F$ in advance.
Then (\ref{9}) serves as an obstruction to the existence of $F$.
Namely, if we can show the non-existence of a solution of the equation (\ref{9}) for a given 
$\nabla R$ and $h_{ij}$, we can conclude that $(\g, \naiseki)$ does not admit a local 
isometric embedding into $\mathbb{R}^4$, possessing $h_{ij}$ as its second fundamental form.
(In case $(M,g)$ is locally symmetric, i.e., the case of $\nabla R = 0$, the derived Gauss 
equation (\ref{9}) does not serve as an obstruction, because $h_{ijk} = 0$ gives a trivial 
solution of (\ref{9}) for any $h_{ij}$.)

Hereinafter, we only study three-dimensional metric Lie algebras $(\g, \naiseki)$ that admit 
a solution of the Gauss equation in codimension $1$, 
excluding the cases where the existence of local isometric embeddings 
into $\mathbb{R}^4$ is already known.
Explicitly, we only consider the following remaining cases:

\bigskip

$\bullet$
$(\mathfrak{r}_3, \naiseki_{\lambda})$ with $\displaystyle{\frac{1}{\sqrt{3}} < \lambda < 1}$.

$\bullet$
$(\mathfrak{r}_{3,\alpha}, \naiseki_{\lambda})$ with 
\begin{align*}
& |\lambda| < \frac{\sqrt{\sqrt{(1+\alpha^2)^2+12 \alpha^2}-(1+\alpha^2)}}{\sqrt{6}(1-\alpha)} 
& & \mbox{when} \quad \alpha \in [-1, 0), \\
& \rule{0cm}{1.1cm} \frac{\sqrt{\sqrt{(1+\alpha^2)^2+12 \alpha^2}-(1+\alpha^2)}}{\sqrt{6}(1-\alpha)}<|\lambda| 
< \frac{\sqrt{\alpha}}{1-\alpha} & & \mbox{when} \quad  \alpha \in (0, 1).
\end{align*} 

$\bullet$
$(\mathfrak{r}^{\prime}_{3,\alpha}, \naiseki_{\lambda})$ with $\alpha >0$ and 
\begin{align*}
& \sqrt{\frac{1+2\alpha^2 +2 \sqrt{1+\alpha^2+\alpha^4}}{3}} < \lambda < \alpha +\sqrt{1+\alpha^2}. 
\end{align*}

$\bullet$
Three-dimensional simple metric Lie algebra $(\mathfrak{g}, \naiseki_{(u,v)})$, where $(u,v)$ satisfies 
one of the conditions in Proposition~\ref{G_simple}, excluding the case $(\mathfrak{so}(3), \naiseki_{(0,2)})$.

\bigskip

\noindent
For the first three solvable cases we have $R_{1212} \neq 0$, $R_{2323} \neq 0$ and $R_{1223} = R_{1323} = 0$ 
from the arguments stated in the proof of Propositions~\ref{G1}, \ref{G2}, \ref{G3}.
Then by Lemma~\ref{Lem G} (1) we see that the solution of the Gauss equation is uniquely determined up to 
sign, and it is given by (\ref{sol}) for these solvable cases.
Note that the solvability of the derived Gauss equation (\ref{9}) does not depend on the 
choice of the sign of $h_{ij}$.

\subsection{Solvable cases}
First of all, we prove the following lemma. 
\begin{lem}\label{dGeq}
Let $(\g, \naiseki)$ be a three-dimensional solvable metric Lie algebra, 
and $\{ x_1,x_2,x_3 \}$ be the orthonormal basis whose non-zero bracket relations are given by 
\begin{align*}
[x_1, x_2] = a x_2 + 2b x_3, \quad [x_1, x_3] = 2c x_2 + d x_3 \quad (a,b,c,d \in \mathbb{R}). 
\end{align*}
Assume $R_{1212} \neq 0$, $R_{2323} \neq 0$, $R_{1223} = R_{1323} = 0$ and $(\g, \naiseki)$ admits 
a solution of the Gauss equation.
If $(\g,\naiseki)$ admits a solution of the derived Gauss equation, 
then the equality  
\begin{align*}
(b+c)(R_{1313}-R_{1212})+(a-d)R_{1213}=0
\end{align*}
holds.
\end{lem}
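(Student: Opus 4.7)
Since the hypotheses of Lemma~\ref{Lem G}(1) hold, the Gauss equation admits a solution unique up to sign given by (\ref{sol}); in particular $h_{12} = h_{13} = 0$, while $h_{22}, h_{33}, h_{23}$ equal $R_{1212}/h_{11}$, $R_{1313}/h_{11}$, $R_{1213}/h_{11}$ respectively, with $h_{11}\neq 0$. The strategy is to specialize the derived Gauss equation (\ref{9}) to the four index tuples $(p,i,j,k,l) = (2,1,2,2,3), (2,3,1,2,3), (3,1,2,2,3), (3,3,1,2,3)$, whose left-hand sides are precisely the four derived curvature components $\nabla_2 R_{1223}$, $\nabla_2 R_{3123}$, $\nabla_3 R_{1223}$, $\nabla_3 R_{3123}$ computed in Lemma~\ref{R1}.

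Using $h_{12} = h_{13} = 0$ together with the symmetry $h_{ijk} = h_{jik} = h_{ikj}$ (which follows from the integrability conditions (\ref{int_2}) and (\ref{int_3})), each of the four equations reduces to a linear relation in the three unknowns $h_{122}, h_{123}, h_{133}$, yielding a $4 \times 3$ system with coefficient matrix
\[
M = \begin{pmatrix} h_{23} & -h_{22} & 0 \\ -h_{33} & h_{23} & 0 \\ 0 & h_{23} & -h_{22} \\ 0 & -h_{33} & h_{23} \end{pmatrix}.
\]
A direct check, using the Gauss identity $h_{22}h_{33} - h_{23}^2 = R_{2323}$, shows that the covector $(h_{33}, h_{23}, -h_{23}, -h_{22})$ annihilates $M$ from the left. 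Hence for the system to be solvable the compatibility condition
\[
h_{33}\,\nabla_2 R_{1223} + h_{23}\,\nabla_2 R_{3123} - h_{23}\,\nabla_3 R_{1223} - h_{22}\,\nabla_3 R_{3123} = 0
\]
must hold.

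Finally, I would substitute the four formulas from Lemma~\ref{R1} into this compatibility condition together with the explicit expressions for $h_{22}, h_{23}, h_{33}$. After clearing the common factor $1/h_{11}$, the terms containing $R_{1212}R_{1313}$, $R_{1213}R_{1313}$, $R_{1212}R_{1213}$ and $(b+c)R_{1213}^2$ cancel pairwise, and what remains factors as $R_{2323}\bigl[(b+c)(R_{1212} - R_{1313}) + (d-a) R_{1213}\bigr]$. Since $R_{2323} \neq 0$, dividing through yields the stated identity. The only real obstacle is bookkeeping: one must carefully track the several pairwise cancellations in the final expansion, but no inequality or case analysis is required.
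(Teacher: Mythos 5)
Your proposal is correct and follows essentially the same route as the paper: the authors also specialize the derived Gauss equation to the four components $\nabla_2 R_{1223}$, $\nabla_2 R_{3123}$, $\nabla_3 R_{1223}$, $\nabla_3 R_{3123}$, use $h_{12}=h_{13}=0$ from Lemma~\ref{Lem G}(1), and extract your compatibility covector in the equivalent form of solving for $h_{213}$ and $h_{312}$ from two $2\times 2$ subsystems and imposing $h_{213}=h_{312}$; the final cancellation pattern you describe is exactly what occurs. (Minor remark: the annihilation $(h_{33},h_{23},-h_{23},-h_{22})M=0$ holds identically, so the Gauss identity $h_{22}h_{33}-h_{23}^2=R_{2323}$ is not actually needed at that step, only the nonvanishing of $R_{2323}$ at the end.)
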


\begin{proof}
By the assumption that $(\g,\naiseki)$ has a solution of the derived Gauss equation, 
we obtain  
\begin{align}
\label{dG_4_1} \nabla_{2} R_{1223} & = h_{212} h_{23} + h_{12} h_{223} - h_{213} h_{22} - h_{13} h_{222}, \\
\label{dG_4_2} \nabla_{2} R_{3123} & = h_{232} h_{13} + h_{32} h_{213} - h_{233} h_{12} - h_{33} h_{212}, \\
\label{dG_4_3} \nabla_{3} R_{1223} & = h_{312} h_{23} + h_{12} h_{323} - h_{313} h_{22} - h_{13} h_{322}, \\
\label{dG_4_4} \nabla_{3} R_{3123} & = h_{332} h_{13} + h_{32} h_{313} - h_{333} h_{12} - h_{33} h_{312}. 
\end{align}
Since $R_{1223} = R_{1323} = 0$, $R_{1212} \neq 0$, $R_{2323} \neq 0$, the metric Lie algebra $(\g,\naiseki)$ 
admits a unique solution of the Gauss equation up to sign, and it is given by 
\begin{align*}
& h_{11}=\pm \sqrt{\frac{R_{1212} R_{1313}-R_{1213}^2}{R_{2323}}}, \
h_{22} = \frac{R_{1212}}{h_{11}} ,\
h_{33} = \frac{R_{1313}}{h_{11}} ,\\
& h_{23} = \frac{R_{1213}}{h_{11}} ,\
h_{12} = h_{13} = 0.
\end{align*}
Substituting $h_{12} = h_{13} = 0$ into (\ref{dG_4_1}) $\sim$ (\ref{dG_4_4}), we calculate 
two expressions
\begin{align*}
& (\ref{dG_4_1}) \times h_{33} + (\ref{dG_4_2}) \times h_{23}, \\ 
& (\ref{dG_4_3}) \times h_{23} + (\ref{dG_4_4}) \times h_{22}.
\end{align*}
Then we have
\begin{align*}
h_{213} 
& = -\frac{1}{h_{22}h_{33}-h_{23}^2}\left\{ h_{33} (\nabla_{2} R_{1223}) + h_{23} (\nabla_{2} R_{3123}) \right\} \\
& = -\frac{1}{R_{2323}}\left\{ h_{33} (\nabla_{2} R_{1223}) + h_{23} (\nabla_{2} R_{3123}) \right\}, \\
h_{312} & = -\frac{1}{h_{22}h_{33}-h_{23}^2}\left\{ h_{22} (\nabla_{3} R_{3123}) + h_{23} (\nabla_{3} R_{1223}) \right\} \\
& = -\frac{1}{R_{2323}}\left\{ h_{22} (\nabla_{3} R_{3123}) + h_{23} (\nabla_{3} R_{1223}) \right\}.
\end{align*}
(Here, we used the property $h_{23} = h_{32}$.)

On the other hand, by Lemma~\ref{R1} we have 
\begin{align*}
& \nabla_{2} R_{1223} = (b+c)(R_{1212}-R_{2323}) - aR_{1213},  \\
& \nabla_{2} R_{3123} = a(R_{1313}-R_{2323}) - (b+c)R_{1213},  \\
& \nabla_{3} R_{1223} = d(R_{1212}-R_{2323}) - (b+c)R_{1213}, \\
& \nabla_{3} R_{3123} = (b+c)(R_{1313}-R_{2323}) - dR_{1213}. 
\end{align*}
This yields that 
\begin{align*}
0&=h_{213}-h_{312}=\frac{1}{h_{11}} \left\{ 
(b+c)(R_{1313}-R_{1212})+(a-d)R_{1213}
\right\}.
\end{align*}
We thus obtain 
\begin{align*}
(b+c)(R_{1313}-R_{1212})+(a-d)R_{1213}=0.
\end{align*}
\end{proof}

\begin{prop}
Let $(\mathfrak{r}_3, \naiseki)$ be a solvable metric Lie algebra, admitting a solution of 
the Gauss equation in codimension $1$.
Then for any solution of the Gauss equation, $(\mathfrak{r}_3, \naiseki)$ does not have a 
solution of the derived Gauss equation. 
\end{prop}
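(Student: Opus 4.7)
The plan is to apply Lemma~\ref{dGeq} directly, so essentially no new computation is needed beyond identifying the parameters and checking that the necessary condition it produces fails throughout the relevant range of $\lambda$.

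First I would invoke Proposition~\ref{M1}(1) to reduce to the one-parameter family $(\mathfrak{r}_3, \naiseki_\lambda)$ with Milnor frame $\{x_1,x_2,x_3\}$ and bracket relations $[x_1,x_2] = x_2 + 2\lambda x_3$, $[x_1,x_3] = x_3$. Matching this against the template of Lemma~\ref{R} (and hence of Lemma~\ref{dGeq}), I read off $a = 1$, $b = \lambda$, $c = 0$, $d = 1$. By Proposition~\ref{G1}, the hypothesis that a solution of the Gauss equation exists in codimension $1$ forces $\frac{1}{\sqrt{3}} < \lambda < 1$; in particular $R_{1212} \ne 0$ and $R_{2323} = \lambda^2 - 1 \ne 0$, while $R_{1223}=R_{1323}=0$. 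Thus the hypotheses of Lemma~\ref{dGeq} are met.

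Next I would apply the conclusion of Lemma~\ref{dGeq}. Substituting the above values of $a,b,c,d$ gives the necessary condition
\begin{equation*}
\lambda\bigl(R_{1313} - R_{1212}\bigr) + (1-1)R_{1213} = \lambda\bigl(R_{1313} - R_{1212}\bigr) = 0.
\end{equation*}
Using the curvature formulas recorded in the proof of Proposition~\ref{G1},
\begin{equation*}
R_{1313} - R_{1212} = (\lambda^2 - 1) - \bigl(-(3\lambda^2+1)\bigr) = 4\lambda^2,
\end{equation*}
so the necessary condition becomes $4\lambda^3 = 0$, i.e.\ $\lambda = 0$. This contradicts $\lambda > \frac{1}{\sqrt 3}$, and hence no solution of the derived Gauss equation exists, regardless of the choice of sign in the (essentially unique) solution of the Gauss equation furnished by Lemma~\ref{Lem G}(1).

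There is no real obstacle here: the content has been packaged into Lemma~\ref{dGeq}, and the only thing to verify is that the algebraic identity it produces is incompatible with the open range of admissible $\lambda$. The only point worth double-checking is the matching of parameters between Proposition~\ref{M1}(1) and the template of Lemma~\ref{R} (so that one does not misread $c$ or mix up $b$ and $c$), since in Milnor form $[x_1,x_3]=x_3$ has no $x_2$-component, giving $c=0$ and making the $(a-d)R_{1213}$ term vanish automatically.
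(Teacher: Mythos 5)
Your proposal is correct and follows exactly the same route as the paper: invoke Lemma~\ref{dGeq} with $a=d=1$, $b=\lambda$, $c=0$, compute $(b+c)(R_{1313}-R_{1212})+(a-d)R_{1213}=4\lambda^3$, and conclude this cannot vanish on the admissible range $\tfrac{1}{\sqrt{3}}<\lambda<1$ from Proposition~\ref{G1}. The parameter matching and the curvature values agree with the paper's computation.
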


\begin{proof}
Take any inner product $\naiseki$ on $\mathfrak{r}_3$.
Recall that, there exist $\lambda$ and a basis $\{ x_1,x_2,x_3 \}$, whose non-zero bracket relations are given by 
\begin{align*}
[x_1,x_2]=x_2+2 \lambda x_3, \ [x_1, x_3]=x_3.
\end{align*}
Since $(\mathfrak{r}_3,\naiseki)$ has a solution of the Gauss equation, we have 
$\frac{1}{\sqrt{3}} < \lambda <1$ by Proposition~\ref{G1}.
We apply Lemma~\ref{dGeq} for the case $a=d=1$, $b= \lambda$, $c=0$. 
Then it follows 
\begin{align*}
(b+c)(R_{1313}-R_{1212}) +(a-d)R_{1213}&= \lambda (R_{1313}-R_{1212}) \\
&=\lambda \{ -1+\lambda^2 - (-1-3\lambda^2)\} \\
&= 4 \lambda^3 \neq 0,
\end{align*}
which completes the proof.
\end{proof}

By applying Lemma~\ref{dGeq} and by similar arguments as above, one can show the following propositions. 

\begin{prop}
Let $\alpha \in [-1, 0) \cup (0,1)$ and let $(\mathfrak{r}_{3,\alpha}, \naiseki)$ be a solvable metric 
Lie algebra, admitting a solution of the Gauss equation in codimension $1$.
Then for any solution of the Gauss equation, $(\mathfrak{r}_{3,\alpha}, \naiseki)$ does not have a 
solution of the derived Gauss equation. 
\end{prop}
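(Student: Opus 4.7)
The plan is to follow the same template as in the proof for $\mathfrak{r}_3$. By Proposition~\ref{M1}(2) every inner product on $\mathfrak{r}_{3,\alpha}$ is represented by $\naiseki_\lambda$ corresponding to $a=1$, $b=\lambda(\alpha-1)$, $c=0$, $d=\alpha$ in the notation of Lemma~\ref{R}. Substituting the curvatures of Lemma~\ref{R} into the necessary condition of Lemma~\ref{dGeq}, the left-hand side factors as a constant multiple of $\lambda(\alpha-1)^3(1+4\lambda^2)$, which vanishes only when $\lambda=0$ since $\alpha\neq 1$ and $1+4\lambda^2>0$.

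For $\alpha\in(0,1)$ the admissible range for $\lambda$ from Proposition~\ref{G2}(3) is bounded away from $0$, so Lemma~\ref{dGeq} alone already suffices. The hard case will be $\alpha\in[-1,0)$ with $\lambda=0$, which lies inside the admissible range of Proposition~\ref{G2}(2) and cannot be excluded by Lemma~\ref{dGeq}.

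To handle this remaining case I plan to extract additional constraints from the derived Gauss equation~\eqref{9} beyond those used to prove Lemma~\ref{dGeq}. When $\lambda=0$ the off-diagonal curvatures $R_{1213}$, $R_{1223}$, $R_{1323}$ all vanish, so by Lemma~\ref{Lem G}(1) the second fundamental form is diagonal with nonzero $h_{11}$, $h_{22}$, $h_{33}$; moreover $\nabla_{e_1}e_i=0$ forces every $\nabla_1 R_{ijkl}$ to vanish identically. The equations at $(p,i,j,k,l)=(2,3,1,2,3)$ and $(3,1,2,2,3)$ should determine $h_{122}$ and $h_{133}$ in terms of the nonzero derivatives $\nabla_2R_{3123}=\alpha(1-\alpha)$ and $\nabla_3R_{1223}=-\alpha(1-\alpha)$ supplied by Lemma~\ref{R1}. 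Substituting these into the homogeneous equations $\nabla_1 R_{1212}=0$ and $\nabla_1 R_{1313}=0$ should yield the linear consistency condition $\alpha^2 h_{122}=h_{133}$, which collapses to $2\alpha(1-\alpha)h_{11}=0$ upon plugging in the computed values, a contradiction for $\alpha\in[-1,0)$. The main difficulty of the plan is identifying which auxiliary equations beyond those used in Lemma~\ref{dGeq} carry genuinely new content, since in this diagonal regime most components of the derived Gauss equation are automatically satisfied.
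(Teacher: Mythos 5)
Your proposal is correct and follows essentially the same route as the paper: Lemma~\ref{dGeq} applied with $a=1$, $b=\lambda(\alpha-1)$, $c=0$, $d=\alpha$ yields $\lambda(\alpha-1)^3(1+4\lambda^2)$ and disposes of all $\lambda\neq 0$, and the residual case $\lambda=0$, $\alpha\in[-1,0)$ is settled by direct analysis of components of the derived Gauss equation in the diagonal regime. The only (immaterial) difference is in the last step: the paper combines $\nabla_1R_{1212}=\nabla_1R_{1313}=\nabla_1R_{2323}=0$ to force $h_{111}=h_{122}=0$ and then contradicts $\nabla_2R_{1323}=\alpha(\alpha-1)\neq 0$, whereas you solve for $h_{122}$, $h_{133}$ from the two nonzero mixed derivatives and contradict the consistency relation coming from the $\nabla_1$ equations --- equivalent manipulations of the same overdetermined linear system.
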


\begin{proof}
Take any inner product $\naiseki$ on $\mathfrak{r}_{3, \alpha}$.
Recall that, there exist $\lambda$, and a basis $\{ x_1,x_2,x_3 \}$ whose non-zero bracket relations are given by 
\begin{align*}
[x_1,x_2]=x_2+2 \lambda (\alpha-1) x_3, \ [x_1, x_3]= \alpha x_3. 
\end{align*}
By Proposition~\ref{G2}, we only consider the following cases:
\begin{align*}
& |\lambda| < \frac{\sqrt{\sqrt{(1+\alpha^2)^2+12 \alpha^2}-(1+\alpha^2)}}{\sqrt{6}(1-\alpha)} 
& & \mbox{when} \quad \alpha \in [-1, 0), \\
& \frac{\sqrt{\sqrt{(1+\alpha^2)^2+12 \alpha^2}-(1+\alpha^2)}}{\sqrt{6}(1-\alpha)}<|\lambda| 
< \frac{\sqrt{\alpha}}{1-\alpha} & & \mbox{when} \quad  \alpha \in (0, 1).
\end{align*} 

\noindent
We apply Lemma~\ref{dGeq} for $a=1$, $b=\lambda(\alpha-1)$, $c=0$ and $d=\alpha$.
In case $\lambda \neq 0$, we have 
\begin{align*}
(b+c)(R_{1313}-R_{1212}) +(a-d)R_{1213} &=\lambda (\alpha-1)(R_{1313}-R_{1212})+(1-\alpha)R_{1213} \\
&=\lambda(\alpha-1)^3(1+4\lambda^2) \neq 0,
\end{align*} 
which shows that $(\g, \naiseki)$ dose not admit a solution of the derived Gauss equation.

We next consider the case $\lambda =0$.
In this case we have $\alpha \in [-1, 0)$, and it is easy to check that $R_{1212}=-1$, 
$R_{1313}=-\alpha^2$, $R_{2323}=-\alpha$, $R_{1213}=R_{1223}=R_{1323}=0$.
Hence we have
$$
S = \frac{R_{1212}R_{1313}-R_{1213}^2}{R_{2323}} = -\alpha > 0,
$$
and by Lemma~\ref{Lem G}
we have $h_{11}, h_{22}, h_{33} \neq 0$, and $h_{12}=h_{13}=h_{23}=0$. 
As for the derivatives of $R$ we have $\nabla_p R_{ijkl} = 0$ except for two cases 
$\nabla_3 R_{1223} = \nabla_2 R_{1323} = \alpha(\alpha-1) \neq 0$. 
From the derived Gauss equation (\ref{9}) we thus obtain that
\begin{align*}
0=\nabla_1 R_{1212}=h_{111}h_{22}+h_{11}h_{122},\\
0=\nabla_1 R_{1313}=h_{111}h_{33}+h_{11}h_{133}.
\end{align*}
It follows from these equalities that 
\begin{align*}
h_{122}=- \frac{h_{111}h_{22}}{h_{11}}, \qquad 
h_{133}=- \frac{h_{111}h_{33}}{h_{11}}.
\end{align*}
By substituting these equalities into $0=\nabla_1 R_{2323}= h_{122}h_{33}+h_{22}h_{133}$, 
we obtain $h_{111}=0$, and this gives that $h_{122}=0$.
On the other hands, we see that
\begin{align*}
0 \neq \alpha(\alpha-1) & = \nabla_2 R_{1323} = h_{212}h_{33}+h_{12}h_{233}-h_{213}h_{32}-h_{13}h_{232} \\
& = h_{122}h_{33} = 0,
\end{align*}
which is a contradiction. We thus complete the proof.
\end{proof}

\begin{prop}
Let $\alpha >0$ and let $(\mathfrak{r}^{\prime}_{3,\alpha}, \naiseki)$ be a solvable metric 
Lie algebra, admitting a solution of the Gauss equation in codimension $1$.
Then for any solution of the Gauss equation, $(\mathfrak{r}^{\prime}_{3,\alpha}, \naiseki)$ does not 
have a solution of the derived Gauss equation. 
\end{prop}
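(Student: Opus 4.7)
The plan is to apply Lemma~\ref{dGeq} directly, since the hypotheses there are tailor-made for this situation. First I would identify the structure constants by matching the bracket relations of Proposition~\ref{M1}(3), namely $[x_1,x_2]=\alpha x_2-\lambda x_3$ and $[x_1,x_3]=(1/\lambda)x_2+\alpha x_3$, against the general form $[x_1,x_2]=ax_2+2bx_3$, $[x_1,x_3]=2cx_2+dx_3$ used in Lemma~\ref{R}. One reads off $a=d=\alpha$, $b=-\lambda/2$, $c=1/(2\lambda)$, and $t=0$. The essential observation is that $a=d$, which makes the term $(a-d)R_{1213}$ in the obstruction of Lemma~\ref{dGeq} vanish automatically.

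Next I would check the remaining hypotheses of Lemma~\ref{dGeq}. The conditions $R_{1223}=R_{1323}=0$ are immediate from Lemma~\ref{R}. Because we restrict to the parameter range in Proposition~\ref{G3}(2), the inequalities~(\ref{ineq_2}) force $\lambda>1$ strictly (this is where $\alpha>0$ is used); consequently the explicit expression $R_{1212}=-\alpha^2-\tfrac14(\lambda-1/\lambda)(3\lambda+1/\lambda)$ is strictly negative, so $R_{1212}\neq0$. Likewise, the Gauss-solvability interval lies strictly below $\lambda^2=(\alpha+\sqrt{1+\alpha^2})^2$, the unique value in $\lambda\geq1$ at which $R_{2323}=0$; hence $R_{2323}\neq0$ throughout the range under consideration. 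By Proposition~\ref{G3}(2) a solution of the Gauss equation exists, so all the hypotheses of Lemma~\ref{dGeq} hold.

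It then remains to verify that the quantity $(b+c)(R_{1313}-R_{1212})+(a-d)R_{1213}$ is nonzero. The second summand vanishes by $a=d$, so the question reduces to computing $(b+c)(R_{1313}-R_{1212})$. Using the explicit curvatures from the proof of Proposition~\ref{G3},
\begin{align*}
R_{1313}-R_{1212}&=\tfrac14(\lambda-1/\lambda)\bigl[(\lambda+3/\lambda)+(3\lambda+1/\lambda)\bigr]
=\lambda^{2}-\lambda^{-2},\\
b+c&=\tfrac{1}{2}(-\lambda+1/\lambda)=\tfrac{1-\lambda^{2}}{2\lambda},
\end{align*}
so the product equals $-(\lambda^{2}-1)(\lambda^{4}-1)/(2\lambda^{3})$, which is strictly negative (hence nonzero) for every $\lambda>1$. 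Lemma~\ref{dGeq} then rules out a solution of the derived Gauss equation, finishing the proof.

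I do not expect a genuine obstacle here: once one recognises that $a=d=\alpha$ kills the $R_{1213}$ contribution in Lemma~\ref{dGeq}, the argument becomes a one-line curvature computation, entirely parallel to the preceding two propositions for $\mathfrak{r}_{3}$ and $\mathfrak{r}_{3,\alpha}$. The only small subtlety worth pointing out in the write-up is that strict positivity of $\lambda-1$ must be extracted from the Gauss-solvability range before concluding that the key expression is nonzero.
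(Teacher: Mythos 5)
Your proposal is correct and follows essentially the same route as the paper: apply Lemma~\ref{dGeq} with $a=d=\alpha$, $b=-\lambda/2$, $c=1/(2\lambda)$, observe that $(a-d)R_{1213}$ vanishes, and check that $(b+c)(R_{1313}-R_{1212})=-\tfrac12(\lambda-1/\lambda)^2(\lambda+1/\lambda)\neq 0$ since the Gauss-solvability range forces $\lambda>1$. Your extra care in verifying the hypotheses $R_{1212}\neq 0$, $R_{2323}\neq 0$, $R_{1223}=R_{1323}=0$ matches what the paper establishes in the preamble to Section~5.
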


\begin{proof}
Take any inner product $\naiseki$ on $\mathfrak{r}^{\prime}_{3, \alpha}$.
Recall that there exist $\lambda \geq 1$ and a basis $\{ x_1,x_2,x_3 \}$, whose non-zero bracket relations are given by 
\begin{align*}
[x_1,x_2]=\alpha x_2- \lambda x_3, \ [x_1, x_3]= \frac{1}{\lambda} x_{2}+ \alpha x_3.
\end{align*}
Then, by Proposition~\ref{G3} and the property $\alpha > 0$, we have 
\begin{align*}
1 < \sqrt{\frac{1+2\alpha^2 +2 \sqrt{1+\alpha^2+\alpha^4}}{3}} < \lambda. 
\end{align*}
We apply Lemma~\ref{dGeq} for the case $a= \alpha$, $b=-\lambda/2$, $c=1/(2\lambda)$, $d=\alpha$. 
Then by a direct calculation, we have 
\begin{align*}
(b+c)(R_{1313}-R_{1212})+(a-d)R_{1213}&=\left( -\frac{\lambda}{2}+\frac{1}{2 \lambda} \right)(R_{1313}-R_{1212}) \\
&=-\frac{1}{2}\left( \lambda-\frac{1}{\lambda} \right)^2 \left( \lambda+\frac{1}{\lambda} \right) \neq 0.
\end{align*}
We complete the proof.
\end{proof}

\subsection{Simple cases}
Throughout this subsection, we consider three-dimensional simple metric Lie algebra, 
which is not isometric to $(\mathfrak{so}(3), k\naiseki_{(0,2)})$ for any $k >0$.

Let $(\g, \naiseki)$ be a three-dimensional simple metric Lie algebra.
Recall that, there exist $\lambda_2 >0$, $\lambda_3 \neq 0$ and an orthonormal basis 
$\{ x_1, x_2, x_3 \}$ of $\g$ with respect to $\naiseki$ such that the bracket relations are given by 
\begin{align*} 
[x_1, x_2] = \lambda_3 x_3, \quad [x_2, x_3] = x_1, \quad [x_3, x_1]=\lambda_2 x_2. 
\end{align*}
Here we put 
$$
\lambda_2:=\frac{u+v}{2}, \qquad \lambda_3:=\frac{-u+v}{2}
$$
as in subsection 4.3. 
Remind that $u \neq \pm v$.

\begin{lem}\label{dGeqS}
Let $(\g, \naiseki)$ be a three-dimensional simple metric Lie algebra, which is not isometric 
to $(\mathfrak{so}(3), k\naiseki_{(0,2)})$ for any $k>0$. 
Assume that $(\g, \naiseki)$ has a solution of the Gauss equation in codimension $1$. 
If $(\g, \naiseki)$ has a solution of the derived Gauss equation, 
then the following equalities hold:
\begin{align*}
R_{2323} (\nabla_{1}R_{1213})&=-R_{1313} (\nabla_{2}R_{1223})=R_{1212} (\nabla_{3}R_{2313}).
\end{align*}
\end{lem}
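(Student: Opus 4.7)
The strategy is to exploit a strong rigidity that the simple hypothesis forces on the second fundamental form: the unique (up to sign) solution of the Gauss equation must be diagonal. Once this is established, three carefully chosen specializations of the derived Gauss equation collapse to one-term identities involving the single unknown $h_{123}$, from which the stated chain of equalities is immediate.

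First I would invoke Lemma~\ref{R} with the simple-case parameters $a=d=0$, $b=(-u+v)/4$, $c=-(u+v)/4$, $t=1/2$ to read off that $R_{1213}=R_{1223}=R_{1323}=0$. Combined with the observation from the proof of Proposition~\ref{G_simple} that a solution of the Gauss equation forces $R_{1212}R_{1313}R_{2323}>0$ (in particular $R_{2323}\neq 0$), Lemma~\ref{Lem G}(1) then applies and yields a unique solution up to sign which is in fact \emph{diagonal}: $h_{12}=h_{13}=h_{23}=0$, the diagonal entries $h_{11},h_{22},h_{33}$ are all nonzero, and
\begin{align*}
h_{11}h_{22}=R_{1212},\qquad h_{11}h_{33}=R_{1313},\qquad h_{22}h_{33}=R_{2323}.
\end{align*}

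Next I would substitute this diagonal form into the derived Gauss equation~(\ref{9}) at the three specific index choices
\begin{align*}
(p,i,j,k,l)=(1,1,2,1,3),\quad (2,1,2,2,3),\quad (3,2,3,1,3).
\end{align*}
Because $h_{ij}$ is diagonal and $h_{ijk}$ is symmetric in all three indices, each of the four terms on the right of~(\ref{9}) vanishes except a single one, leaving
\begin{align*}
\nabla_1 R_{1213}=h_{11}\,h_{123},\qquad \nabla_2 R_{1223}=-h_{22}\,h_{123},\qquad \nabla_3 R_{2313}=h_{33}\,h_{123}.
\end{align*}

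Finally, multiplying these three identities by $R_{2323}$, $-R_{1313}$, and $R_{1212}$ respectively and using the relations $h_{22}h_{33}=R_{2323}$, $h_{11}h_{33}=R_{1313}$, $h_{11}h_{22}=R_{1212}$, each of the three products reduces to the common quantity $h_{11}h_{22}h_{33}\,h_{123}$, which yields the claimed chain of equalities. The main (minor) obstacle is the combinatorial step of isolating the three correct index tuples so that the off-diagonal terms cancel and a single residual term survives; once this choice is made, no real computation remains. No further case analysis is needed, since the exclusion of $(\mathfrak{so}(3),k\naiseki_{(0,2)})$ is inessential to the derivation itself (in that symmetric case $\nabla R=0$ and the equalities hold trivially).
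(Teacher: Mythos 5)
Your proposal is correct and follows essentially the same route as the paper: both use the diagonal form of the unique (up to sign) solution of the Gauss equation from Lemma~\ref{Lem G}(1), substitute it into the same three instances of the derived Gauss equation to isolate $h_{123}=h_{213}=h_{321}$, and derive the chain of equalities from the symmetry of $h_{ijk}$ together with $h_{11}h_{22}=R_{1212}$, $h_{11}h_{33}=R_{1313}$, $h_{22}h_{33}=R_{2323}$. Your closing remark that the exclusion of $(\mathfrak{so}(3),k\naiseki_{(0,2)})$ is inessential for this lemma is also accurate.
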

\begin{proof}
By the assumption that $(\g, \naiseki)$ has a solution of the derived Gauss equation, 
the following equalities hold:
\begin{align*}
\nabla_1 R_{1213}& = h_{111} h_{23}+h_{11} h_{123} - h_{121} h_{13} - h_{21} h_{113},\\
\nabla_2 R_{1223}& = h_{212} h_{23}+h_{12} h_{223} - h_{222} h_{13} - h_{22} h_{213},\\
\nabla_3 R_{2313}& = h_{321} h_{33}+h_{21} h_{333} - h_{331} h_{23} - h_{31} h_{323},  
\end{align*}
where $h_{ij}$ is the solution of the Gauss equation given by
\begin{align*}
&h_{11}=\pm \sqrt{\frac{R_{1212} R_{1313}}{R_{2323}}}, \
h_{22} = \frac{R_{1212}}{h_{11}} ,\
h_{33} = \frac{R_{1313}}{h_{11}} ,\
h_{23} = h_{12} = h_{13} = 0.
\end{align*}
Note that $h_{ij}$ is uniquely determined up to sign, and satisfies $h_{11} \neq 0$, $h_{22} \neq 0$, 
$h_{33} \neq 0$.
These yield that 
\begin{align*}
h_{123}=\frac{\nabla_{1} R_{1213}}{h_{11}},\
h_{213}=-\frac{h_{11} (\nabla_{2} R_{1223})}{R_{1212}},\
h_{321}=\frac{h_{11} (\nabla_{3} R_{2313})}{R_{1313}}.
\end{align*}
Since $h_{ijk}$ is a symmetric $3$-tensor, we have $h_{123}=h_{213}=h_{321}$.
By these conditions, we can immediately show that 
\begin{align*}
R_{2323} (\nabla_{1}R_{1213})&=-R_{1313} (\nabla_{2}R_{1223})=R_{1212} (\nabla_{3}R_{2313}).
\end{align*}
\end{proof}

\begin{prop}
Let $(\g, \naiseki)$ be a three-dimensional simple metric Lie algebra, which is not 
isometric to $(\mathfrak{so}(3), k\naiseki_{(0,2)})$ for any $k >0$.
Assume that $(\g, \naiseki)$ has a solution of the Gauss equation in codimension $1$.
Then $(\g, \naiseki)$ does not have a solution of the derived Gauss equation for any solution 
of the Gauss equation.
\end{prop}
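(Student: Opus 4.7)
The plan is to apply Lemma~\ref{dGeqS} and substitute the explicit formulas for the non-vanishing components of $\nabla R$ from Lemma~\ref{R2}, together with those of $R$ recorded in the proof of Proposition~\ref{G_simple}, namely
\begin{align*}
R_{1212}&=\tfrac14(1-u^2+2(v-1)u), & \nabla_1 R_{1213}&=\tfrac12 u(v-1)^2,\\
R_{1313}&=\tfrac14(1-u^2-2(v-1)u), & \nabla_2 R_{1223}&=\tfrac14(u-1)^2(u-v+2),\\
R_{2323}&=-\tfrac14(1-u^2-2(v-1)), & \nabla_3 R_{2313}&=-\tfrac14(u+1)^2(u+v-2),
\end{align*}
into the three-term equality $R_{2323}\nabla_1 R_{1213}=-R_{1313}\nabla_2 R_{1223}=R_{1212}\nabla_3 R_{2313}$. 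This reduces the obstruction to showing that a pair of polynomial equations $P_1(u,v)=0$, $P_2(u,v)=0$ has no common zero with $u\neq\pm v$ (so that $\g$ is genuinely simple, cf.\ the discussion after Proposition~\ref{G_simple}) lying in the admissible region of Proposition~\ref{G_simple}, except $(u,v)=(0,2)$, which is precisely the excluded case $(\mathfrak{so}(3),\naiseki_{(0,2)})$.

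Before grinding out the polynomials I would exploit the discrete symmetry $u\mapsto -u$, which corresponds to interchanging the roles of $x_2$ and $x_3$ in the bracket relations. A short check using the explicit formulas above shows that under this involution the triple $(A,B,C):=\bigl(R_{2323}\nabla_1 R_{1213},\,-R_{1313}\nabla_2 R_{1223},\,R_{1212}\nabla_3 R_{2313}\bigr)$ is sent to $(-A,-C,-B)$. Therefore $A=B=C$ is equivalent to the pair $B-C=0$ (even in $u$) and $2A-B-C=0$ (odd in $u$); each of these inherits clean polynomial factors from the visible $(u\pm 1)^2$ in $\nabla_2 R_{1223}$ and $\nabla_3 R_{2313}$ (and from the factors $u$ and $(v-1)^2$ in $\nabla_1 R_{1213}$), which can be stripped off at once to yield two much lower-degree relations.

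The main obstacle I anticipate is the algebraic bookkeeping required to enumerate \emph{all} real common zeros of the reduced system. A convenient route is to compute $\mathrm{Res}_v(P_1,P_2)$, a polynomial solely in $u$, whose real roots can be listed explicitly; substituting each root back into one of the equations pins down the corresponding $v$, and a case-by-case check against the allowed lines and region of Figure~3 rules out every joint solution other than $(u,v)=(0,2)$. Any candidate solution must either sit on the excluded lines $u=\pm v$ (where $\g$ is not simple) or coincide with $(0,2)$, which is excluded by hypothesis. Once this verification is complete, Lemma~\ref{dGeqS} forces the derived Gauss equation to be unsolvable for every simple metric Lie algebra in the admissible region other than $(\mathfrak{so}(3),k\naiseki_{(0,2)})$, yielding the proposition.
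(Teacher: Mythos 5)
Your overall strategy coincides with the paper's: invoke Lemma~\ref{dGeqS}, substitute the explicit formulas for $R$ and $\nabla R$ from Lemma~\ref{R2} and the proof of Proposition~\ref{G_simple}, and show that the resulting polynomial system in $(u,v)$ has no admissible common zero other than $(u,v)=(0,2)$. Your symmetry observation is correct (one checks that $u\mapsto -u$ sends $R_{1313}\mapsto R_{1212}$, $\nabla_2R_{1223}\mapsto\nabla_3R_{2313}$ and $\nabla_1R_{1213}\mapsto-\nabla_1R_{1213}$, so indeed $(A,B,C)\mapsto(-A,-C,-B)$), and it would organize the computation nicely. But there are two genuine problems. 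First, the decisive step is missing: you reduce the proposition to the claim that $P_1=P_2=0$ has no real common zero off the locus $u=\pm v$ except $(0,2)$, and then merely assert that the resultant computation and the ensuing case-by-case check would confirm this. That verification is the entire mathematical content of the proof; as written your argument is a plan, not a proof, and you yourself flag the enumeration of common zeros as an unaddressed obstacle. Second, the claim that $B-C$ and $2A-B-C$ ``inherit clean polynomial factors from the visible $(u\pm1)^2$\,\dots\,which can be stripped off at once'' is incorrect as stated: the factors $(u-1)^2$, $(u+1)^2$, $u$ and $(v-1)^2$ occur in \emph{different} summands of those combinations and do not divide the sums, so no immediate degree reduction is available along these lines.

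For comparison, the paper's proof first disposes of $u=0$ (where the three-term equality forces $v=2$, the excluded case), and for $u\neq 0$ forms the two combinations $(A-C)\mp\frac{u+v}{2}(B-C)$, which \emph{do} factor explicitly as $-\frac{1}{16}(u-1)(v-1)l_1$ and $\frac{1}{16}(u+1)(u+v-2)l_2$ for explicit polynomials $l_1,l_2$. It then shows that under the equality $A=B=C$ one has $u\neq\pm1$, $v\neq1$ and $u+v\neq2$ (using $u\neq\pm v$), concludes $l_1=l_2=0$, and reaches a contradiction from $l_1-l_2=(u+1)^2(u^2-2v+1)$ together with $l_1=\frac32(u-1)^2(u+1)^3\neq0$ on the curve $u^2-2v+1=0$. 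Note also that this argument needs no appeal to the admissible region of Proposition~\ref{G_simple}: the equalities fail for every $(u,v)$ with $u\neq\pm v$ except $(0,2)$, so the intersection with the region of Figure~3 that you propose is unnecessary (though harmless). To turn your proposal into a proof you would have to actually carry out the elimination --- e.g.\ compute the resultant you describe, list its real roots, and check each --- or else adopt factorable combinations such as the paper's.
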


\begin{proof}
We use the same notations as above.
From the proof of Proposition~\ref{G_simple} we have 
\begin{align*}
& R_{1212} = \frac{1}{4}\{ 1-u^2+2(v-1)u \}, \\
& R_{1313} = \frac{1}{4}\{ 1-u^2-2(v-1)u \}, \\ 
& R_{2323} = -\frac{1}{4}\{ 1-u^2-2(v-1) \},
\end{align*}

\noindent
and by Lemma~\ref{R2} we have 
\begin{align*}
\nabla_{1} R_{1213} &=
\frac{1}{2}u(v-1)^2, \\
\nabla_{2} R_{1223} &=
\frac{1}{4}(u-1)^2(u-v+2), \\
\nabla_{3} R_{2313} &=
-\frac{1}{4}(u+1)^2(u+v-2).  \\
\end{align*}
Assume that the equalities 
\begin{align}\label{C}
R_{2323} (\nabla_{1}R_{1213})&=-R_{1313} (\nabla_{2}R_{1223})=R_{1212} (\nabla_{3}R_{2313})
\end{align}
hold.
Note that, in the case of $u=0$, it is easy to see that the conditions~(\ref{C}) hold if and only if $v=2$, 
and we have excluded this case in advance.
Hereinafter we suppose that $u \neq 0$.
Then, by the conditions~(\ref{C}), we obtain that 
\begin{align*}
0&=\{ R_{2323} (\nabla_{1}R_{1213})-R_{1212} (\nabla_{3}R_{2313}) \} \\
& \qquad \qquad \qquad -\frac{u+v}{2} \left\{ -R_{1313} (\nabla_{2}R_{1223})-R_{1212} (\nabla_{3}R_{2313}) \right\}\\
&=-\frac{1}{16}(u - 1)(v - 1)l_1,\\ 
\rule{0cm}{0.7cm} 0&= \{ R_{2323} (\nabla_{1}R_{1213})-R_{1212} (\nabla_{3}R_{2313}) \} \\
& \qquad \qquad \qquad +\frac{u+v}{2} \left\{ -R_{1313} (\nabla_{2}R_{1223})-R_{1212} (\nabla_{3}R_{2313})\right\}\\
&=\frac{1}{16}(u + 1)(u+v-2)l_2,
\end{align*}
where
\begin{align*}
l_1:=& u^4+(v-1)u^3+(v-3)u^2+(4 v^2-9 v+5)u-v+2,\\
l_2:=& (v-3)u^3+(3 v-5)u^2+(4 v^2-5v+3)u+v+1.
\end{align*} 
By conditions~(\ref{C}) we can easily show that $u= \pm 1$ if and only if $v=1$.
This contradicts the assumption $u \neq \pm v$, and hence we have $u \neq \pm 1$, $v \neq 1$.
In addition we can show $u+v \neq 2$ by using the assumption $u \neq 0$. 
Therefore we have $l_1= l_2 =0$.

Then it follows that 
\begin{align*}
0=l_1-l_2=(u+1)^2(u^2-2v+1).
\end{align*} 
In case $u^2-2v+1=0$, direct calculations show that 
\begin{align*}
& 0 = l_1 = l_2 = \frac{3}{2}(u-1)^2(u+1)^3, 
\end{align*}
which contradicts $u \neq \pm 1$.
Consequently the conditions~(\ref{C}) do not hold when $u \neq 0$.
\end{proof}

\end{document}